\newtheorem{theorem}{Theorem}[section]
\newtheorem{lemma}[theorem]{Lemma}
\newtheorem{proposition}[theorem]{Proposition}
\theoremstyle{definition}
\newtheorem{definition}[theorem]{Definition}
\newtheorem{remark}[theorem]{Remark}
\theoremstyle{remark}
\newcommand{\e}{\varepsilon}
\begin{document}
\title[On invariants of multiplexed virtual links]{On invariants of multiplexed virtual links}

\author[Kodai Wada]{Kodai Wada}
\address{Department of Mathematics\\ Kobe University\\ 1-1 Rokkodai, Nada-ku\\ Kobe 657-8501, Japan}
\email{wada@math.kobe-u.ac.jp}

\subjclass[2020]{Primary 57K12; Secondary 57K10}

\keywords{virtual knot, virtual link, writhe, linking number, virtual coloring}

\thanks{This work was supported by JSPS KAKENHI Grant Numbers JP21K20327 and JP23K12973.}



\begin{abstract}
For a virtual knot $K$ and an integer $r$ with $r\geq2$, we introduce a method of constructing an $r$-component virtual link $L(K;r)$, which we call the $r$-multiplexing of $K$. 
Every invariant of $L(K;r)$ is an invariant of $K$. 
We give a way of calculating three kinds of invariants of $L(K;r)$ using invariants of $K$. 
As an application of our method, we also show that Manturov's virtual $n$-colorings for $K$ can be interpreted 
as certain classical $n$-colorings for $L(K;2)$. 
\end{abstract}

\maketitle

\section{Introduction}
Virtual links were defined by Kauffman~\cite{Kau} as a generalization of classical links in the $3$-sphere. 
Let $r$ be a positive integer. 
An \emph{$r$-component virtual link diagram} is the image of an immersion of $r$ ordered and oriented circles into the plane, whose singularities are only transverse double points. 
Such double points are divided into \emph{positive}, \emph{negative}, and \emph{virtual crossings} as shown in Figure~\ref{xing}. 
A positive/negative crossing is also called a \emph{real crossing}. 

\begin{figure}[htbp]
\centering
  \begin{overpic}[width=6cm]{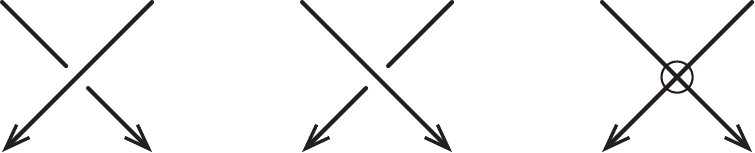}
    \put(1,-15){positive}
    \put(67.1,-15){negative}
    \put(139,-15){virtual}
  \end{overpic}
\vspace{1em}
\caption{Types of crossings}
\label{xing}
\end{figure}

Two virtual link diagrams are said to be \emph{equivalent} if they are related by a finite sequence of \emph{generalized Reidemeister moves}, which consist of classical Reidemeister moves R1--R3 and virtual Reidemeister moves VR1--VR4 as shown in Figure~\ref{gReid}. An \emph{$r$-component virtual link} is an equivalence class of $r$-component virtual link diagrams. 
In particular, a $1$-component virtual link (diagram) is called a \emph{virtual knot} (\emph{diagram}). 

\begin{figure}[htbp]
\centering
  \begin{overpic}[width=11cm]{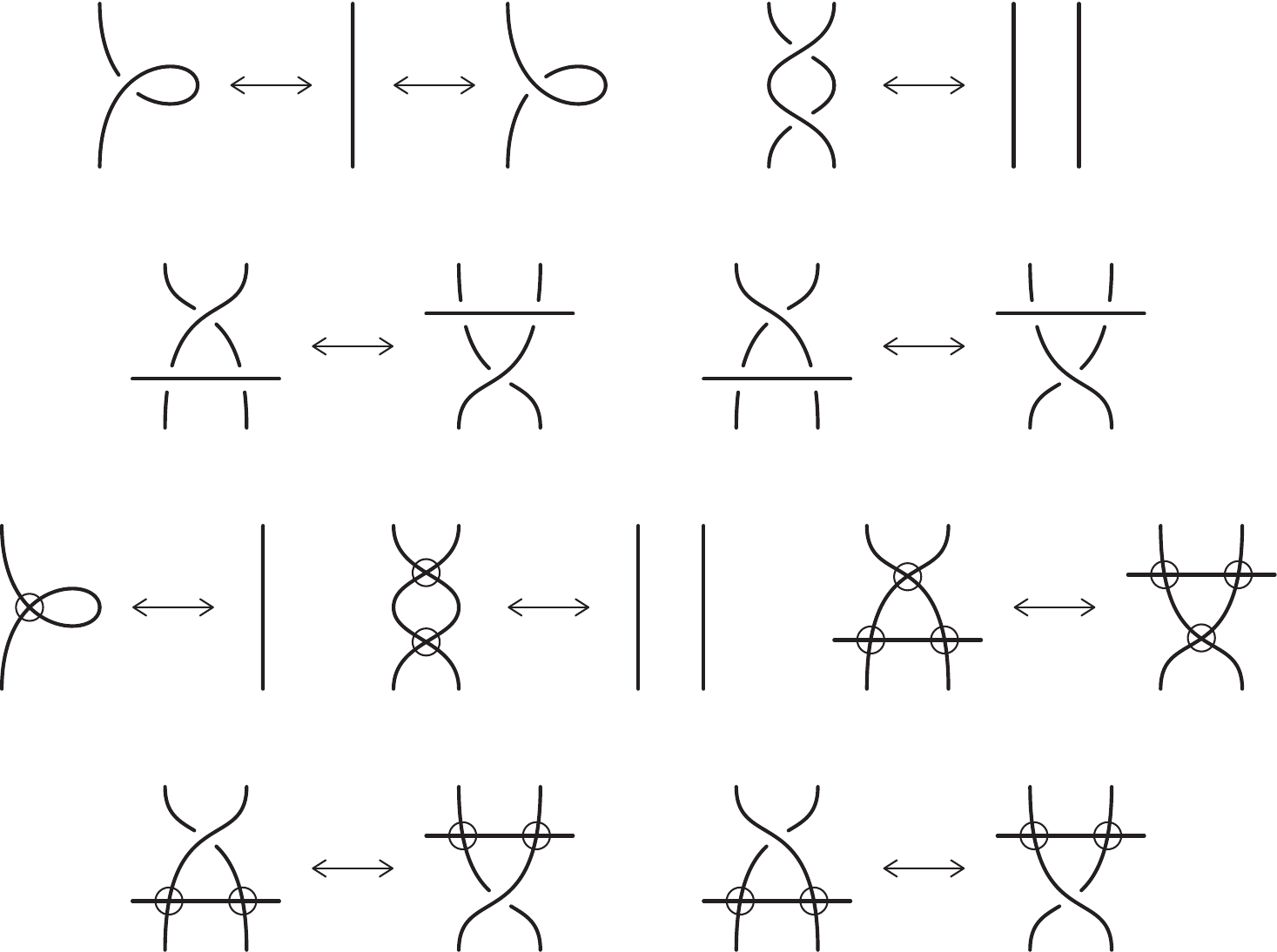}
    \put(60.6,216.7){R1}
    \put(100.7,216.7){R1}
    \put(220.4,216.7){R2}
    \put(80.3,152.7){R3}
    \put(220.4,152.7){R3}
    \put(32.5,88.6){VR1}
    \put(124,88.6){VR2} 
    \put(248,88.6){VR3}
    \put(76,24.6){VR4}
    \put(216,24.6){VR4}  
  \end{overpic}
\caption{Generalized Reidemeister moves}
\label{gReid}
\end{figure}

This paper has two purposes. 
The first is to introduce a method of constructing an $r$-component virtual link diagram $L(D;r)$ 
from a virtual knot diagram $D$ for an integer $r$ with $r\geq2$. 
We call this method the \emph{$r$-multiplexing} of $D$. 
See Section~\ref{sec-multiplexing} for the precise definition. 
In this section we will show the following. 

\begin{theorem}\label{thm-multiplexing}
If two virtual knot diagrams $D$ and $D'$ are equivalent, then $L(D;r)$ and $L(D';r)$ are equivalent for any integer $r$ with $r\geq2$. 
\end{theorem}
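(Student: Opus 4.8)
The plan is to reduce Theorem~\ref{thm-multiplexing} to a purely local statement. Since $D$ and $D'$ are equivalent, there is a finite chain $D=D_0,D_1,\dots,D_n=D'$ in which each $D_{i+1}$ is obtained from $D_i$ by a single generalized Reidemeister move (one of R1, R2, R3, VR1, VR2, VR3, VR4) carried out inside a small disk $\Delta_i$, with $D_i$ and $D_{i+1}$ agreeing outside $\Delta_i$. By induction on $n$, using transitivity of equivalence for the inductive step, it suffices to treat the case $n=1$: one generalized Reidemeister move on $D$ must induce an equivalence $L(D;r)\simeq L(D';r)$.

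Next I would exploit the locality of the $r$-multiplexing. From the construction in Section~\ref{sec-multiplexing}, $L(D;r)$ is obtained by replacing each arc of $D$ with $r$ parallel copies and each crossing of $D$ by a fixed tangle pattern depending only on the type (and, for real crossings, the sign) of that crossing, with the $r$ copies running parallel away from the crossings. Hence, if $D$ and $D'$ agree outside a disk $\Delta$, then $L(D;r)$ and $L(D';r)$ agree outside a slightly enlarged disk $\Delta'$, and inside $\Delta'$ one sees precisely the two tangles obtained by multiplexing the two local pictures of the move. Thus the theorem reduces to seven assertions: for each generalized Reidemeister move, the two multiplexed local tangles are related, relative to the boundary of $\Delta'$, by a finite sequence of generalized Reidemeister moves. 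The virtual moves VR1--VR4 are the easy cases: multiplexing a virtual crossing produces an $r\times r$ grid of virtual crossings, and any configuration built entirely from virtual crossings can be rearranged freely using VR1--VR4 together with the detour move (a consequence of VR2--VR4); the detour move can also be used throughout to slide ``off-diagonal'' families of parallel strands, i.e.\ those meeting the local picture only in virtual crossings, out of the way whenever convenient.

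The real moves R1, R2, R3 carry the substance of the argument, and I would treat each by drawing the two multiplexed tangles explicitly: the ``diagonal'' strands carry $r$ parallel copies of the original R1/R2/R3, which are realized by iterating that same classical move, while the remaining grid crossings are matched up using R2, VR3, and detours. I expect R3 to be the main obstacle. There the three strands of the move become three blocks of $r$ parallel strands each, so the multiplexed tangles contain a sizeable grid of real and virtual crossings that must be commuted past one another in a prescribed order. The standard argument showing that a parallel/satellite operation survives R3 — push one block of $r$ strands across the crossing of the other two blocks, repeating R2 and R3 — should adapt, but I will need to verify carefully that the virtual crossings appearing in the chosen pattern never obstruct these slides, that none of the required moves is a forbidden move, and, should the pattern be sensitive to the blackboard framing, that the framing change produced by R1 is absorbed as well.
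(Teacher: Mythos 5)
Your proposal is correct in outline and follows essentially the same strategy as the paper: reduce by locality and induction to a single generalized Reidemeister move, then check that the two multiplexed local tangles are related by generalized Reidemeister moves, treating the purely virtual moves by detour-type arguments and the classical moves by explicit slides. The differences are these. The paper invokes Polyak's minimal generating set $\{$R1a, R1b, R2a, R3a$\}$, so only four oriented classical moves need explicit verification (Lemmas~\ref{lem-R1}--\ref{lem-R3}), whereas your plan would have to handle all oriented versions or justify a similar reduction. Also, VR4 is not a ``configuration built entirely from virtual crossings'': its local picture contains a real crossing, which multiplexes to $r$ real and $r^{2}-r$ virtual crossings; what rescues your argument is exactly your parenthetical remark, namely that the moving block of $r$ parallel strands meets the picture only in virtual crossings, so the detour move applies strand by strand. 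The paper formalizes precisely this via the moves $M_{1}(r)$--$M_{8}(r)$ (Lemmas~\ref{lem-M1toM4} and~\ref{lem-M5toM8}) and Lemma~\ref{lem-VR4}. Finally, the two checks you flag but do not carry out both go through, and for the same structural reason: since every off-diagonal crossing in the multiplexing pattern is virtual, the multiplexed R1 kink consists of $r$ removable real kinks plus detourable virtual crossings, so there is no full-twist/framing obstruction of the kind that afflicts genuine $r$-cables (Figure~\ref{pf-lem-R1}), and the multiplexed R3 is realized by $r$ applications of R3a together with VR3 and VR4 moves, with no forbidden move needed (Figure~\ref{pf-lem-R3}). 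So nothing in your plan fails; what separates it from a complete proof is carrying out these local verifications, which the paper does by explicit move sequences.
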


Let $D$ be a virtual knot diagram equipped with an additional information called a \textit{cut system~$P$}. 
An \emph{$r$-fold cyclic covering diagram $\varphi_{r}(D,P)$} of the pair $(D,P)$ is 
an $r$-component virtual link diagram introduced by Kamada~\cite{Kam19}, 
who proved that if two virtual knot diagrams $D$ and $D'$ are equivalent, then so are $\varphi_{r}(D, P)$ and $\varphi_{r}(D', P')$ for any cut systems $P$ and $P'$ of $D$ and $D'$, respectively. 
We can see that $L(D;r)$ is equivalent to $\varphi_{r}(D,P)$ for any cut system $P$. 
Thus although the diagrams $L(D;r)$ and $\varphi_{r}(D,P)$ yield the same virtual link, 
our method is more simple than Kamada's one 
in the sense that it does not use cut systems. 
The author believes that it is worth proposing the $r$-multiplexing construction. 

Let $K$ be a virtual knot. 
We denote by $L(K;r)$ the $r$-component virtual link represented by $L(D;r)$ constructed from a diagram $D$ of $K$. 
It is called the \emph{$r$-multiplexed virtual link} of $K$. 
By Theorem~\ref{thm-multiplexing}, every invariant of $L(K;r)$ is an invariant of $K$. 
Therefore it is expected that new invariants of $K$ are obtained from known invariants of $L(K;r)$. 
In fact, it was demonstrated by Kamada~\cite{Kam19-talk} that the Kishino knot $K$ and the trivial knot~$O$ can be distinguished by the Jones polynomials of $L(K;3)$ and $L(O;3)$ 
via the $3$-fold cyclic covering $\varphi_{3}$, 
although $K$ and $O$ have the same Jones polynomial. 
It would be interesting to develop such a study further. 

In a different direction, Theorem~\ref{thm-multiplexing} motivates us to find a way of understanding and calculating 
invariants of $L(K;r)$ by means of invariants of $K$.  
In Section~\ref{sec-inv} we study three kinds of invariants of $L(K;r)=K_{1}\cup\cdots\cup K_{r}$: 
the $(i,j)$-linking number $\mathrm{Lk}(K_{i},K_{j})\in\mathbb{Z}$~\cite{GPV}, 
the $i$th $n$-writhe $J_{n}^{i}(L(K;r))\in\mathbb{Z}$~\cite{Xu} and the knot type of $K_{i}$. 
The second purpose of this paper is to provide a way of describing these three invariants, in terms of invariants of $K$, 
by proving Theorems~\ref{thm-lk}, \ref{thm-self-writhe} and \ref{thm-knot-type}. 

The \emph{$n$-writhe $J_{n}(K)$} of $K$  
is an integer-valued invariant of $K$ for any nonzero integer~$n$ 
defined by Satoh and Taniguchi~\cite{ST}. 
We will show that the $(i,j)$-linking number $\mathrm{Lk}(K_{i},K_{j})$ and 
the $i$th $n$-writhe $J_{n}^{i}(L(K;r))$ of $L(K;r)=K_{1}\cup\cdots\cup K_{r}$ can be calculated from $J_{n}(K)$ as follows: 

\begin{theorem}\label{thm-lk}
Let $K$ be a virtual knot and $r$ an integer with $r\geq2$. 
Then the $r$-multiplexed virtual link $L(K;r)=K_{1}\cup\dots\cup K_{r}$ of $K$ satisfies 
\[
\mathrm{Lk}(K_{i},K_{j})=\displaystyle\sum_{n\equiv i-j\hspace{-0.5em}\pmod{r}}J_{n}(K)
\] 
for any integers $i$ and $j$ with $1\leq i\neq j\leq r$. 
\end{theorem}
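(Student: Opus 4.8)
The plan is to compute both sides of the identity from a single diagram and then invoke the invariance already established. Fix a diagram $D$ of $K$ and let $L(D;r)=D_{1}\cup\cdots\cup D_{r}$ be its $r$-multiplexing as defined in Section~\ref{sec-multiplexing}, so that $D_{i}$ is a diagram of $K_{i}$. By definition $\mathrm{Lk}(K_{i},K_{j})$ is the sum of the signs of the real crossings of any diagram of $L(K;r)$ at which $D_{i}$ is the over-arc and $D_{j}$ is the under-arc, and by Theorem~\ref{thm-multiplexing} this integer depends only on $K$ and $r$; so it suffices to evaluate it on the particular diagram $L(D;r)$ and to recognise the result as $\sum_{n\equiv i-j\hspace{-0.5em}\pmod{r}}J_{n}(K)$, the latter being computed from $D$ via $J_{n}(K)=\sum_{\mathrm{Ind}(c)=n}\varepsilon(c)$, where $c$ runs over the real crossings of $D$ of index $\mathrm{Ind}(c)=n$ and $\varepsilon(c)$ is the sign. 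The first input is the crossing structure of $L(D;r)$, read straight off the construction of Section~\ref{sec-multiplexing}: each real crossing $c$ of $D$ lifts to exactly $r$ real crossings $c^{(1)},\dots,c^{(r)}$ of $L(D;r)$; each virtual crossing of $D$ lifts to virtual crossings only; these exhaust the crossings of $L(D;r)$; and each $c^{(k)}$ inherits the sign $\varepsilon(c)$, since near $c$ the multiplexing is locally modelled on $r$ \emph{disjoint} parallel copies of a neighbourhood of $c$ and hence is orientation-preserving.

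The heart of the argument is to pin down, for each real crossing $c$ of $D$, which components of $L(D;r)$ meet at the lift $c^{(k)}$. I will show that at $c^{(k)}$ the over-arc lies on $D_{i}$ and the under-arc lies on $D_{j}$ with $i-j\equiv\mathrm{Ind}(c)\pmod{r}$, and that as $k$ ranges over $1,\dots,r$ the label $i$ ranges over all residues modulo $r$ exactly once; the latter is forced by the natural free cyclic $\mathbb{Z}/r$-symmetry of the multiplexing, which permutes the $r$ lifts of $c$ and shifts the component labels accordingly. The appearance of the index is the crucial point: if $\gamma_{c}$ and $\gamma_{c}'$ denote the two loops obtained from $D$ by the oriented smoothing at $c$, then running the $r$ parallel strands of $L(D;r)$ once along $\gamma_{c}$ changes which of the $r$ strands one is on by the signed count of the real crossings of $D$ at which $\gamma_{c}$ crosses $\gamma_{c}'$, reduced modulo $r$; and by the Gauss-diagram definition of the index that count equals $\mathrm{Ind}(c)$, so comparing the strand-label of the over-arc with that of the under-arc at the common lift $c^{(k)}$ yields the asserted defect. (Conceptually this is clearest if one reidentifies $L(D;r)$ with the $r$-fold cyclic covering of the abstract link diagram underlying $D$ — a curve $\gamma$ on a closed surface, the cover being the one whose monodromy on a loop $\ell$ is the mod-$r$ intersection number $[\gamma]\cdot[\ell]$ — whereupon the monodromy around $\gamma_{c}$ is $[\gamma]\cdot[\gamma_{c}]=[\gamma_{c}']\cdot[\gamma_{c}]\equiv\mathrm{Ind}(c)\pmod{r}$.) I expect this strand-counting, carried out against the explicit construction of Section~\ref{sec-multiplexing}, to be the only genuinely delicate step; some care with orientation and over/under conventions will be needed so that the congruence comes out with $i-j$ rather than $j-i$.

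Granting this, the conclusion follows by reorganising a sum. Fix $i\neq j$ with $1\le i,j\le r$; since $i-j\not\equiv 0\pmod{r}$, every integer $n\equiv i-j\pmod{r}$ is nonzero, so each $J_{n}(K)$ occurring below is a genuine invariant of $K$, and only finitely many are nonzero. By the previous paragraph, the real crossings of $L(D;r)$ at which $D_{i}$ is over and $D_{j}$ is under are precisely the lifts $c^{(k)}$ of those real crossings $c$ of $D$ with $\mathrm{Ind}(c)\equiv i-j\pmod{r}$ — one lift per such $c$ — and each such lift carries the sign $\varepsilon(c)$. Hence
\begin{align*}
\mathrm{Lk}(K_{i},K_{j})
&=\sum_{\mathrm{Ind}(c)\equiv i-j\hspace{-0.5em}\pmod{r}}\varepsilon(c)\\
&=\sum_{n\equiv i-j\hspace{-0.5em}\pmod{r}}\ \ \sum_{\mathrm{Ind}(c)=n}\varepsilon(c)\\
&=\sum_{n\equiv i-j\hspace{-0.5em}\pmod{r}}J_{n}(K),
\end{align*}
which is the desired formula.
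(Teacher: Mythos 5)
Your overall strategy is the same as the paper's: lift each real crossing of $D$ to $r$ real crossings of $L(D;r)$ with the same sign, determine the component-label defect at each lift, observe that the over-component runs once through all $r$ labels, and resum. The gap is at exactly the step you yourself flag as delicate, and it is not closed by what you write. In the construction of Section~\ref{sec-multiplexing} the relabelling of the $r$ parallel strands is inserted at the \emph{virtual} crossings of $D$ (this is where the extra virtual crossings beyond the $r^{2}$ band-crossings appear); at the lifts of a real crossing the strands pass through without relabelling. So the ``strand-counting against the explicit construction'' that you promise produces the shift along the specified path of $c_{0}$ as the \emph{signed number of virtual crossings} on that path --- the quantity the paper calls $\mathrm{ind}_{v}(c_{0})$, with $D_{l}$ passing over $D_{l+k}$ when $\mathrm{ind}_{v}=k$ --- and not, as you assert, as the signed count of real crossings at which $\gamma_{c}$ meets $\gamma_{c}'$. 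Converting the virtual-crossing count into the real-crossing count (hence into $\mathrm{ind}(c_{0})$) still requires an argument, and this is the paper's key observation: the two loops obtained by smoothing at $c_{0}$ are closed curves in the plane, so their total algebraic intersection number, summed over real and virtual crossings alike, vanishes, giving $\mathrm{ind}(c_{0})+\mathrm{ind}_{v}(c_{0})=0$ and therefore $\mathrm{ind}(c_{0})\equiv i-j\pmod{r}$. Your proposal contains no such step; the parenthetical substitute --- reidentifying $L(D;r)$ with the cyclic cover of the abstract (Carter) surface whose monodromy on $\ell$ is $[\gamma]\cdot[\ell]\bmod r$ --- is asserted, not proved, and is essentially as strong as the fact you need (it amounts to the unproved identification with Kamada's $\varphi_{r}$), so it cannot carry the argument.

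The rest of your plan is sound and matches the paper: signs are preserved under lifting; the $r$ lifts of $c_{0}$ have their over-arcs on $r$ distinct components (your appeal to the $\mathbb{Z}/r$ deck action works, as does simply noting that the $r$ strands over any edge lie on $r$ distinct components since $L(D;r)$ has exactly $r$ components each covering $D$ once); and the final reindexing of the sum is routine, using that $i-j\not\equiv0\pmod{r}$ so only genuine invariants $J_{n}(K)$, $n\neq0$, occur. The sign ambiguity you mention is real but harmless: with the paper's conventions one gets $\mathrm{ind}_{v}(c_{0})\equiv j-i$ and $\mathrm{ind}(c_{0})\equiv i-j\pmod{r}$. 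So to complete the proof, replace the covering-space gloss by the explicit two-step computation: shift $=\mathrm{ind}_{v}$ by construction, and $\mathrm{ind}+\mathrm{ind}_{v}=0$ by planarity.
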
 

\begin{theorem}\label{thm-self-writhe}
Let $K$ be a virtual knot and $r$ an integer with $r\geq2$. 
Then the $r$-multiplexed virtual link $L(K;r)$ of $K$ satisfies 
\[
J_{n}^{i}(L(K;r))=
\begin{cases}
J_{n}(K) & \text{if } n\equiv0\pmod{r}, \\
0 & \text{otherwise} 
\end{cases}
\] 
for any integer $i$ with $1\leq i\leq r$. 
In particular, $J_{n}^{i}(L(K;r))=J_{n}^{j}(L(K;r))$ for any integers $i$ and $j$ with $1\leq i\neq j\leq r$. 
\end{theorem}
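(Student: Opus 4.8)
The plan is to reduce Theorem~\ref{thm-self-writhe} to a correspondence between the self-crossings of a single component $K_{i}$ of a multiplexed diagram $L(D;r)$ and the real crossings of the underlying diagram $D$, and then to verify that this correspondence preserves both signs and indices. Here $J_{n}^{i}$ of a virtual link diagram is the signed count of self-crossings of the $i$th component whose index — computed along that component, recording every real crossing of the whole diagram through which the component passes — equals $n$; the invariant $J_{n}(K)$ is the one-component case.

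First I would import, from the definition of the $r$-multiplexing in Section~\ref{sec-multiplexing} and the analysis behind Theorem~\ref{thm-lk}, the local picture of $L(D;r)$: each virtual crossing of $D$ is replaced by a block of purely virtual crossings, and each real crossing $c$ of $D$ is replaced by a block in which the over-bunch of $r$ parallel strands passes over the under-bunch, meeting it in exactly $r$ real crossings $c^{(1)},\dots,c^{(r)}$ (the $k$th strand over the $k$th strand), each of sign $\mathrm{sgn}(c)$, all other crossings of the block being virtual. Moreover, if a lift $c^{(k)}$ has over- and under-strands on components $K_{p}$ and $K_{q}$, then $p-q\equiv\mathrm{Ind}_{D}(c)\pmod{r}$ (in the sign convention of Theorem~\ref{thm-lk}), independently of $k$. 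Hence: if $\mathrm{Ind}_{D}(c)\not\equiv0\pmod{r}$ no lift of $c$ is a self-crossing of $K_{i}$, while if $\mathrm{Ind}_{D}(c)\equiv0\pmod{r}$ the two strands of each lift lie on the same component and exactly one lift $\widehat{c}$ is a self-crossing of $K_{i}$ (the over-bunch contains one strand from each component). This already yields a sign-preserving bijection between the self-crossings of $K_{i}$ and the set $\{\,c:\mathrm{Ind}_{D}(c)\equiv0\pmod{r}\,\}$ of real crossings of $D$.

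The heart of the proof is then to show $\mathrm{Ind}_{L(D;r)}(\widehat{c})=\mathrm{Ind}_{D}(c)$ for such a lift $\widehat{c}$. Since $K_{i}$ covers $D$ exactly once, the sub-arc of $K_{i}$ between the two passes through $\widehat{c}$ projects bijectively onto one of the two arcs $A_{c}$ into which $c$ divides $D$, and the index of $\widehat{c}$ can be read off by travelling along this sub-arc, adding $\mathrm{sgn}(\widehat{c}')$ at each real crossing $\widehat{c}'$ where $K_{i}$ passes over and subtracting it where $K_{i}$ passes under. Each such event occurs precisely when $K_{i}$ runs through the block replacing a real crossing $c'$ encountered along $A_{c}$; there it meets exactly one real crossing, of sign $\mathrm{sgn}(c')$, and it passes over or under that crossing according as the corresponding pass of $A_{c}$ in $D$ runs over or under $c'$. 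Adding these contributions, the count along the sub-arc of $K_{i}$ matches termwise the count along $A_{c}$ that computes $\mathrm{Ind}_{D}(c)$. The step I expect to demand the most care is exactly this block-by-block bookkeeping: one must check that none of the virtual crossings produced by the multiplexing (including those used to re-route the bunches and to close the $r$ components up cyclically) ever contribute to the index, and that in each block the strand of $K_{i}$ meets one and only one real crossing, with the correct over/under information. The explicit local model of Section~\ref{sec-multiplexing} is what licenses this.

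Granting the index identity, the self-crossings of $K_{i}$ of index $n$ are in sign-preserving bijection with the real crossings $c$ of $D$ satisfying $\mathrm{Ind}_{D}(c)=n$ when $r\mid n$, and there are none when $r\nmid n$; taking signed sums gives $J_{n}^{i}(L(K;r))=J_{n}(K)$ for $n\equiv0\pmod{r}$ and $J_{n}^{i}(L(K;r))=0$ otherwise. The final assertion is then immediate, since the resulting expression does not involve $i$.
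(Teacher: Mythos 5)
Your proposal is correct and follows essentially the same route as the paper: a sign-preserving bijection between the real self-crossings of $D_{i}$ and the real crossings of $D$ with index divisible by $r$ (the paper gets the mod-$r$ condition via the auxiliary count $\mathrm{ind}_{v}$ and the identity $\mathrm{ind}(c_{0})+\mathrm{ind}_{v}(c_{0})=0$, which is just another phrasing of your component-shift-by-$\mathrm{Ind}_{D}(c)$ observation borrowed from the proof of Theorem~\ref{thm-lk}), followed by a termwise comparison of the specified paths showing the index is preserved. The only caveat is cosmetic: your ``add $\mathrm{sgn}$ at overpasses, subtract at underpasses'' formula is a convention-dependent rewriting of the paper's left-to-right counting rule (and may differ from it by an overall sign), but since your correspondence preserves the full local data (sign and over/under status) at each crossing along the paths, the equality of the true indices, and hence the theorem, follows all the same.
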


The \emph{$r$th covering $K^{(r)}$} of $K$ is a virtual knot constructed from $K$ for a positive integer $r$, 
which is an invariant of $K$~\cite{IK,NNS}. 
The $r$th covering was originally considered for flat virtual knots by Turaev~\cite{Tur}, 
where a \emph{flat virtual knot}~\cite{Kau} is an equivalence class of virtual knots modulo crossing changes. 
(In~\cite{Tur} flat virtual knots are called \emph{virtual strings}.)  
We will show that the knot type of every $i$th component $K_{i}$ of the $r$-multiplexed virtual link $L(K;r)$ can be determined by $K^{(r)}$ as follows: 

\begin{theorem}\label{thm-knot-type}
Let $K$ be a virtual knot and $r$ an integer with $r\geq2$. 
Then the knot type of the $i$th component $K_{i}$ of $L(K;r)$ coincides with 
that of the $r$th covering $K^{(r)}$ of $K$ 
for any integer $i$ with $1\leq i\leq r$. 
In particular, $K_{1}, \dots, K_{r}$ are the same virtual knot. 
\end{theorem}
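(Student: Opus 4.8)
The plan is to treat this as a statement about the combinatorics of a single component of $L(D;r)$: I would read off a diagram (equivalently, a Gauss diagram) for the $i$th component directly from the $r$-multiplexing of Section~\ref{sec-multiplexing}, and then recognize it as a diagram of the $r$th covering. By Theorem~\ref{thm-multiplexing} the diagram $D$ of $K$ may be fixed arbitrarily, and by the invariance of the $r$th covering (\cite{IK,NNS,Tur}) it is enough to exhibit \emph{one} diagram of the $i$th component $K_i$ and to match it with \emph{one} diagram of $K^{(r)}$.

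First I would make $K_i$ into a virtual knot in its own right by deleting the other $r-1$ components from $L(D;r)$: every crossing of $L(D;r)$ at which $K_i$ met another component then vanishes, and the crossings that survive are exactly the real self-crossings of $K_i$. By the same index bookkeeping that underlies Theorems~\ref{thm-lk} and~\ref{thm-self-writhe}, these self-crossings are precisely the lifts, onto the $i$th sheet, of the real crossings $c$ of $D$ with $\mathrm{Ind}(c)\equiv 0\pmod r$, each carrying the sign of $c$; and tracing $K_i$ through the diagram records how the index data of $D$ is reorganized modulo $r$. This produces an explicit Gauss diagram $G_i$ for $K_i$.

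Next I would compare $G_i$ with the Gauss-diagram description of $K^{(r)}$ from \cite{IK,NNS} (and, at the level of the underlying flat datum, Turaev's $r$-covering \cite{Tur}): both arise from the Gauss datum of $D$ by passing to the cyclic $r$-cover governed by the index (Alexander) data and keeping exactly the chords coming from the index-$0$-mod-$r$ crossings, with their signs and arrow orientations intact. Reconciling the basepoint (cut point) ambiguities inherent in both constructions, the two descriptions agree, giving $K_i\cong K^{(r)}$. Alternatively, using the equivalence $L(D;r)\simeq\varphi_r(D,P)$ one could argue inside Kamada's cyclic cover, where the link with $K^{(r)}$ is closer to its definition; but working directly with the $r$-multiplexing keeps the argument self-contained. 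The last assertion, that $K_1,\dots,K_r$ are all the same virtual knot, does not require this identification at all: the $r$-multiplexing is manifestly invariant under the cyclic relabeling of its $r$ sheets, and this symmetry of $L(D;r)$ carries ``delete all components but $K_i$'' to ``delete all components but $K_{i+1}$'', so $K_i\cong K_{i+1}$ for every $i$.

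The sign and orientation bookkeeping is routine. I expect the main obstacle to be making the identification in the previous paragraph \emph{exact}: one must verify that deleting the other components leaves \emph{precisely} the Gauss diagram defining $K^{(r)}$ — with no stray curls and no shift in the cyclic labeling of sheets — and that the cut point chosen in the construction of $L(D;r)$ and the base data used to define $K^{(r)}$ can be chosen compatibly. Concretely this reduces to a careful analysis of which sheet $K_i$ occupies along each arc of $D$ and of how that sheet jumps at the real crossings of $D$; once that is pinned down, the comparison with \cite{IK,NNS,Tur} is straightforward.
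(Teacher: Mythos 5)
Your proposal is correct and takes essentially the same route as the paper: the paper likewise observes that the self-crossings of the component $D_{i}$ are in bijection with the crossings of $D$, that such a self-crossing is real exactly when the corresponding real crossing of $D$ has index $\equiv 0\pmod{r}$ (the same index bookkeeping as in Theorems~\ref{thm-lk} and~\ref{thm-self-writhe}), and hence that $D_{i}$ is literally the diagram $D^{(r)}$, which also yields $K_{1}=\dots=K_{r}$ without any appeal to a cyclic symmetry of the sheets. The ``main obstacle'' you flag is in fact vacuous: neither the $r$-multiplexing nor the covering $D^{(r)}$ involves cut points or basepoints (the index is defined without them), and since every self-crossing of $D_{i}$ arises from a crossing of $D$ there are no stray curls or labeling shifts to reconcile, so the identification holds on the nose.
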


A \emph{virtual quandle} is a quandle \cite{Joy,Mat} equipped with a unary operation introduced by Manturov~\cite{Man}.
Adding two new relations to every virtual crossing by the unary operation, he defined \emph{virtual quandle colorings} of a virtual knot diagram as a generalization of quandle colorings. 

In Section~\ref{sec-coloring} we define \emph{virtual $n$-colorings} of a virtual knot diagram $D$ as a special case of virtual quandle colorings. 
See Definition~\ref{def-vcoloring}.  
Denote by $\mathrm{Col}_{n}^{v}(D)$ the set of virtual $n$-colorings of $D$. 
We will give an interpretation of $\mathrm{Col}_{n}^{v}(D)$ in terms of a certain subset $\mathrm{Col}_{n}^{0}(L(D;2))$ of the set of (classical) $n$-colorings of $L(D;2)$ as follows: 

\begin{theorem}\label{thm-coloring}
Let $D$ be a virtual knot diagram. 
Then there is a bijection between $\mathrm{Col}_{n}^{v}(D)$ and $\mathrm{Col}_{n}^{0}(L(D;2))$. 
\end{theorem}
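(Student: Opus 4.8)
The plan is to construct an explicit bijection. Both $\mathrm{Col}_{n}^{v}(D)$ and $\mathrm{Col}_{n}^{0}(L(D;2))$ are solution sets of systems of linear equations over $\mathbb{Z}/n\mathbb{Z}$: in the first case, one dihedral Fox relation $a+b\equiv 2c\pmod n$ for each real crossing together with the relation coming from the unary operation of the virtual quandle at each virtual crossing (see Definition~\ref{def-vcoloring}); in the second case, one Fox relation for each real crossing of $L(D;2)$ together with the extra linear condition built into $\mathrm{Col}_{n}^{0}$. So it suffices to write a linear map in each direction, check that it sends solutions to solutions, and check that the two maps are mutually inverse.

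First I would record the combinatorial dictionary between $D$ and $L(D;2)$. By the definition of the $2$-multiplexing, the arcs of $L(D;2)$ come in pairs indexed by the arcs of $D$ --- two parallel strands lying over each arc of $D$ --- and the crossings of $L(D;2)$ fall into blocks indexed by the crossings of $D$: a real crossing of $D$ produces a block of real crossings, while a virtual crossing of $D$ produces a block of virtual crossings together with the interchange of the two parallel strands performed there by the multiplexing. Because the labelling of the two strands over an arc of $D$ is only locally defined (it is swapped by the multiplexing), I would fix a traversal of $D$, carry the two labels along it, and record the blocks at which they are interchanged.

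Given a virtual $n$-coloring $c$ of $D$, define $\Phi(c)$ by coloring the two strands of $L(D;2)$ lying over an arc $\alpha$ of $D$ by $c(\alpha)$ and by the image of $c(\alpha)$ under the unary operation of the virtual quandle (the negation $x\mapsto -x$ in the dihedral case). Three points then need to be checked: (i) at a block coming from a real crossing of $D$, the Fox relations of $L(D;2)$ in that block become equivalent to the single relation that $c$ satisfies there, once the colors of the strands are written in terms of $c$ and the unary operation; (ii) at a block coming from a virtual crossing of $D$, $L(D;2)$ imposes no relation, and $\Phi(c)$ is consistent there precisely because $c$ satisfies the virtual-crossing relation --- the strand interchange of the multiplexing turns that relation into a tautology; (iii) $\Phi(c)$ satisfies the defining condition of $\mathrm{Col}_{n}^{0}$ by construction. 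For the inverse, restrict a coloring $\gamma\in\mathrm{Col}_{n}^{0}(L(D;2))$ to the ``first'' strand along the fixed traversal to get a function $\Psi(\gamma)$ on the arcs of $D$: its real-crossing relations come from the corresponding real-crossing blocks of $L(D;2)$, and its virtual-crossing relations come from combining the effect of the strand interchanges with the $\mathrm{Col}_{n}^{0}$-condition, so $\Psi(\gamma)\in\mathrm{Col}_{n}^{v}(D)$. That $\Psi\circ\Phi$ and $\Phi\circ\Psi$ are the identities is then a direct computation with the local dictionary.

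The \emph{main obstacle} is the virtual crossings. In $D$ a virtual crossing carries genuine coloring relations through the unary operation of Manturov's virtual quandle, whereas virtual crossings of $L(D;2)$ impose no relation at all; the content of those relations must be recovered, after multiplexing, from the way the two parallel strands are permuted and from membership in the subset $\mathrm{Col}_{n}^{0}$. Showing that these two ingredients together encode the virtual-crossing relations of $D$ \emph{exactly} --- so that $\Phi$ is a genuine bijection, not merely injective or surjective --- while keeping the cyclic relabelling of the two parallel strands straight as one traverses $D$, is where the care is needed; once the local picture at each type of block is understood, the remaining verifications are routine.
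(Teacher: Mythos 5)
Your proposal is correct and takes essentially the same route as the paper's proof: color the two parallel strands over each piece of $D$ by $x$ and $-x$, verify the Fox conditions block-by-block at the real crossings, note that the strand interchange at each virtual crossing makes the conditions $z=-x$, $w=-y$ of Definition~\ref{def-vcoloring} exactly match the parallel-pair condition defining $\mathrm{Col}_{n}^{0}(L(D;2))$, and invert by restricting to one distinguished strand. The only cosmetic difference is that the paper distinguishes the two parallel arcs by their right/left position with respect to the orientation (so the ``chosen'' strand automatically changes identity at each interchange, which is what produces the sign flip $z=-x$ upon restriction), rather than carrying traversal-dependent labels and recording the interchanges as you propose.
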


\section{Construction of multiplexed virtual links}\label{sec-multiplexing}
In this section we introduce the $r$-multiplexing of a virtual knot diagram, 
which is a method of constructing an $r$-component virtual link diagram, 
and prove Theorem~\ref{thm-multiplexing}. 

Let $D$ be a virtual knot diagram and $r$ an integer with $r\geq2$.  
Then a virtual link diagram, denoted by $L(D;r)$, 
is constructed by taking $r$ parallel copies of $D$ such that every crossing of $D$ is replaced with a set of crossings as shown in Figure~\ref{multiplexing}. 
More precisely, a positive (or negative) crossing of $D$ is replaced with $r$~positive (or negative) crossings and $r^{2}-r$ virtual crossings. 
A virtual crossing of $D$ is replaced with $r^{2}+2r-2$ virtual crossings. 
We remark that the number of components of $L(D;r)$ is equal to $r$. 
We call this method of constructing $L(D;r)$ the \emph{$r$-multiplexing} of $D$. 
Figure~\ref{ex-3multiplexing} shows an example with $r=3$.

\begin{figure}[htbp]
\centering
\vspace{1em}
  \begin{overpic}[width=8cm]{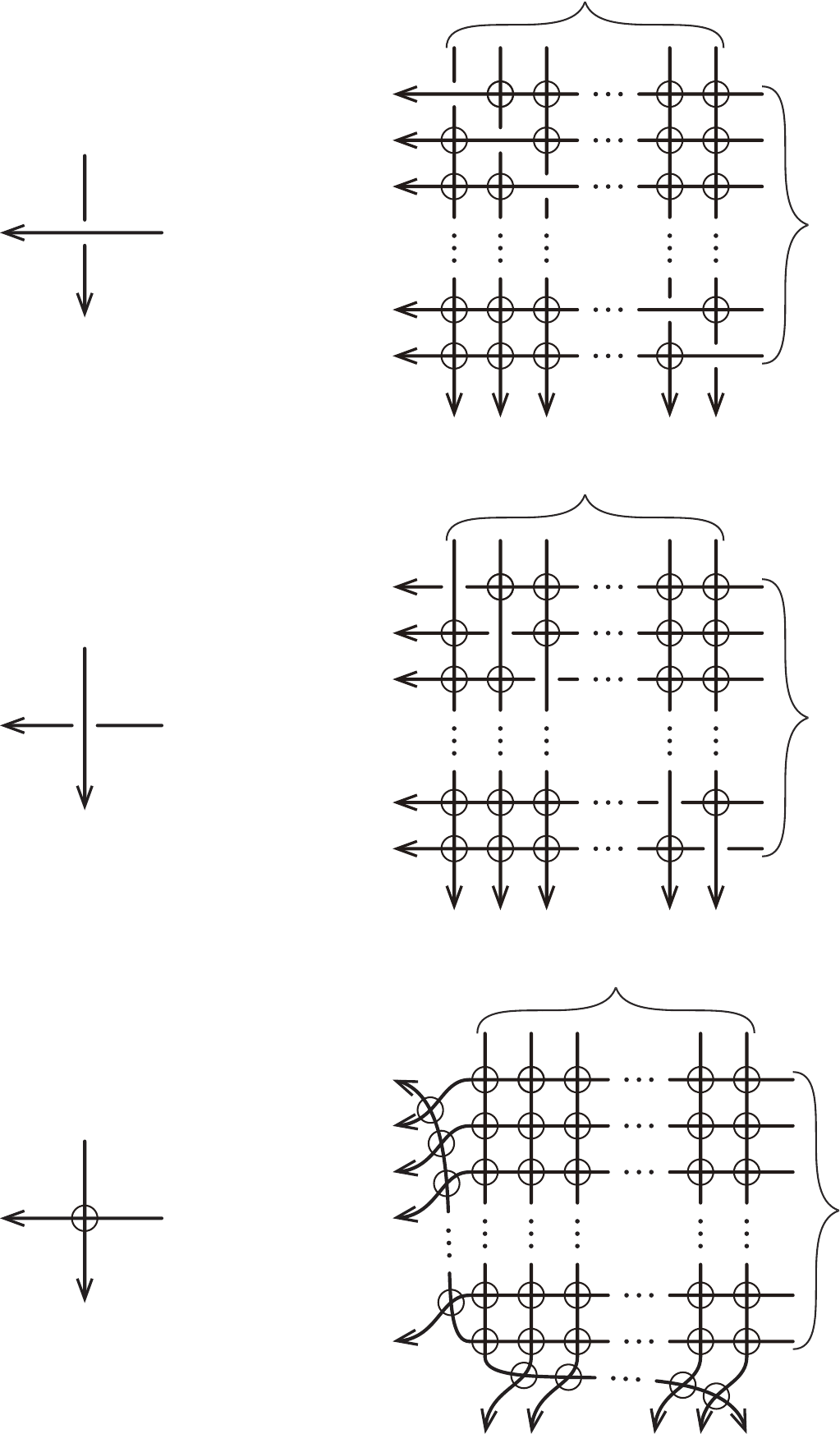}
    \put(-12.5,291){positive crossing}
    \put(-15,158){negative crossing}
    \put(-8.3,23.5){virtual crossing}
    \put(18.5,-14){$D$}
    \put(150,-14){$L(D;r)$}
    \put(156.5,390){$r$}
    \put(221,325){$r$}
    \put(156.5,256.5){$r$}
    \put(221,191.5){$r$}
    \put(164.5,123){$r$}
    \put(229,58){$r$} 
  \end{overpic}
\vspace{1em}
\caption{The $r$-multiplexing of a virtual knot diagram $D$}
\label{multiplexing}
\end{figure}

\begin{figure}[htbp]
\centering
  \begin{overpic}[width=9cm]{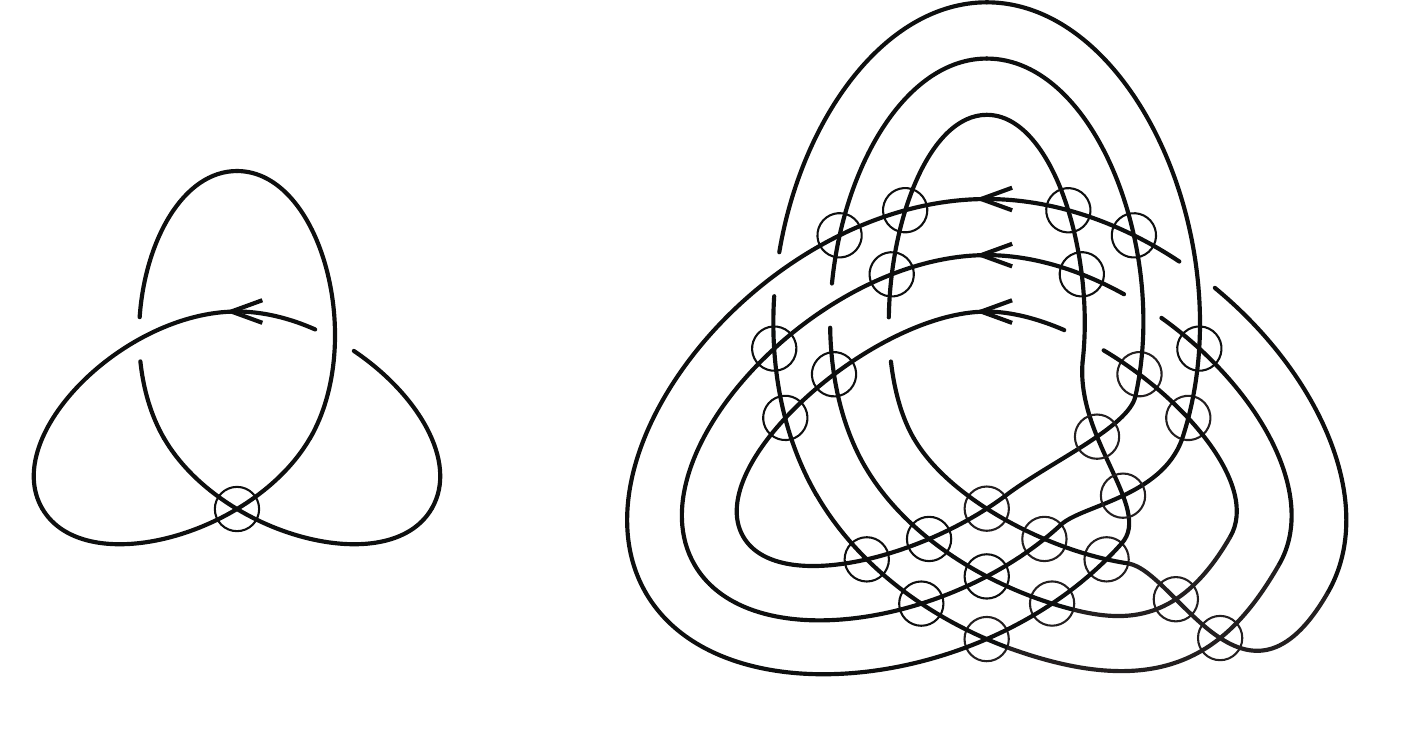}
    \put(38.5,18.5){$D$}
    \put(163,-5){$L(D;3)$}
  \end{overpic}
\vspace{1em}
\caption{A virtual knot diagram $D$ and its $3$-multiplexed virtual link diagram $L(D;3)$}
\label{ex-3multiplexing}
\end{figure}

\begin{remark}
A \emph{classical knot diagram} is a virtual knot diagram without virtual crossings. 
If $D$ is a classical knot diagram, 
then all the nonself-crossings of the $r$-multiplexed virtual link diagram $L(D;r)$ of $D$ are virtual crossings. 
Therefore $L(D;r)$ can be deformed into a disjoint union of $r$ copies of $D$ by a finite sequence of virtual Reidemeister moves VR1--VR4. 
In fact, the Gauss diagram of $L(D;r)$ coincides with a disjoint union of $r$ copies of the Gauss diagram of $D$. 
Refer to~\cite{GPV} for the definition of Gauss diagrams. 
\end{remark}

To give the proof of Theorem~\ref{thm-multiplexing}, 
we prepare six lemmas (Lemmas~\ref{lem-R1}--\ref{lem-VR4}). 
The first three lemmas deal with four moves R1a, R1b, R2a and R3a as shown in Figure~\ref{Polyak-set}, 
which form a minimal generating set of oriented classical Reidemeister moves R1--R3~\cite{Pol}. 
The fourth and fifth lemmas will be used to prove the last one concerning the virtual Reidemeister move VR4. 

\begin{figure}[htbp]
\centering
  \begin{overpic}[width=12cm]{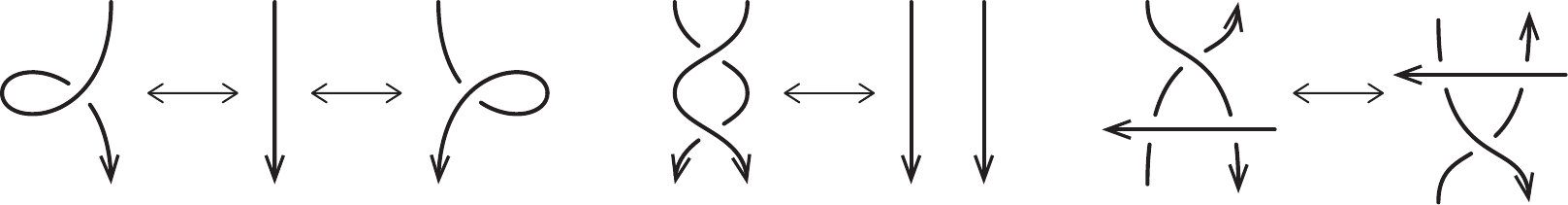}
    \put(33.2,28.3){R1a}
    \put(68.5,28.3){R1b} 
    \put(171.8,28.3){R2a}
    \put(282.7,28.3){R3a}
  \end{overpic}
\caption{A minimal generating set of classical Reidemeister moves}
\label{Polyak-set}
\end{figure}

\begin{lemma}\label{lem-R1}
If two virtual knot diagrams $D$ and $D'$ are related by an R1a or R1b move, 
then $L(D;r)$ and $L(D';r)$ are equivalent for any integer $r$ with $r\geq2$. 
\end{lemma}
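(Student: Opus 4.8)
\textbf{Proof plan for Lemma~\ref{lem-R1}.}
The plan is to work with Gauss-diagrammatic/local pictures and verify the two cases (R1a and R1b) by exhibiting an explicit sequence of generalized Reidemeister moves relating $L(D;r)$ and $L(D';r)$. Since $D$ and $D'$ differ only inside a small disk $\Delta$ where the R1 move is performed, the diagrams $L(D;r)$ and $L(D';r)$ agree outside the corresponding disk $\Delta_r$ obtained by taking $r$ parallel copies; so it suffices to analyze what happens inside $\Delta_r$. On the side of $D$ with no crossing in $\Delta$, the $r$ parallel strands pass through $\Delta_r$ with no crossings at all. On the side of $D'$ with a single positive (say) kink, Figure~\ref{multiplexing} tells us that this kink is replaced by a configuration consisting of $r$ real crossings (each a small kink on one of the $r$ parallel copies) together with $r^2-r$ virtual crossings arising from the parallel copies crossing each other near the kink.

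First I would set up the local model carefully: label the $r$ parallel strands $1,\dots,r$ and describe precisely, from the definition of the $r$-multiplexing, which strand acquires a real self-crossing and which pairs of strands acquire virtual crossings in $\Delta_r$. The key observation is that after the multiplexing, each of the $r$ parallel strands receives exactly one extra real crossing (a kink of the same sign as the original crossing), while the $r^2-r$ virtual crossings organize the strands so that they can be pulled apart. The cleanest argument is: using VR1 and VR2 (and VR3/VR4 to slide virtual crossings past each other and past the kinks), first separate the $r$ kinked strands so that each kink sits on its own in a small sub-disk, disjoint from the other strands and from the remaining virtual crossings; then apply the classical R1 move ($r$ times, once per strand) to remove each kink; finally use VR1--VR4 again to absorb the leftover virtual crossings, arriving at $r$ disjoint parallel strands, which is exactly the local picture of $L(D;r)$. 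The R1a versus R1b distinction only changes the sign/handedness of the kink and the over/under information at the real crossings, so the same combinatorial argument applies verbatim to both, merely with mirrored local pictures.

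The main obstacle — really the only nontrivial point — is bookkeeping: checking that the $r^2-r$ virtual crossings produced by the multiplexing really do permit the $r$ kinked strands to be combed apart using only virtual Reidemeister moves, i.e. that no real crossings obstruct this separation. This is where I would draw the explicit local diagram (as in Figure~\ref{multiplexing} specialized to a kink) and trace through the strands; since virtual crossings can always be slid freely past one another and past real crossings via VR3 and VR4, and since the virtual crossings here are exactly the ``transposition'' crossings coming from parallel copies, the separation is routine but must be exhibited. Once the strands are separated, invoking the classical R1 move $r$ times and cleaning up with VR1 completes the equivalence $L(D;r)\simeq L(D';r)$, proving the lemma for any $r\geq 2$.
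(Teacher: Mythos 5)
Your proposal is correct and follows essentially the same route as the paper: the paper's proof is precisely an explicit local verification (drawn for $r=3$, with the other $r$ "shown similarly") that the multiplexed kink is undone by $r$ classical R1a moves together with virtual Reidemeister (VR2) moves, and the R1b case is handled by reversing orientations. Your separation-then-R1 ordering versus the paper's interleaved removal of the kinks is only a cosmetic difference, and your key observation — that all inter-strand crossings in the multiplexed kink are virtual, so the kinks can be combed apart by virtual moves — is exactly what the paper's figure exhibits.
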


\begin{proof}
We only prove the result for the case $r=3$; 
the other cases are shown similarly. 

Assume that $D$ and $D'$ are related by an R1a move. 
Figure~\ref{pf-lem-R1} shows that $L(D;3)$ and $L(D';3)$ are related by a finite sequence of three R1a moves and several VR2 moves, 
where the symbol ``$\sim$'' means a finite sequence of virtual Reidemeister moves VR1--VR4. 

If $D$ and $D'$ are related by an R1b move, 
then $L(D;3)$ and $L(D';3)$ are related by three R1b moves and several VR2 moves. 
This follows from the sequence in the figure with the orientations of all the strings reversed. 
\end{proof}

\begin{figure}[htbp]
\centering
  \begin{overpic}[width=10cm]{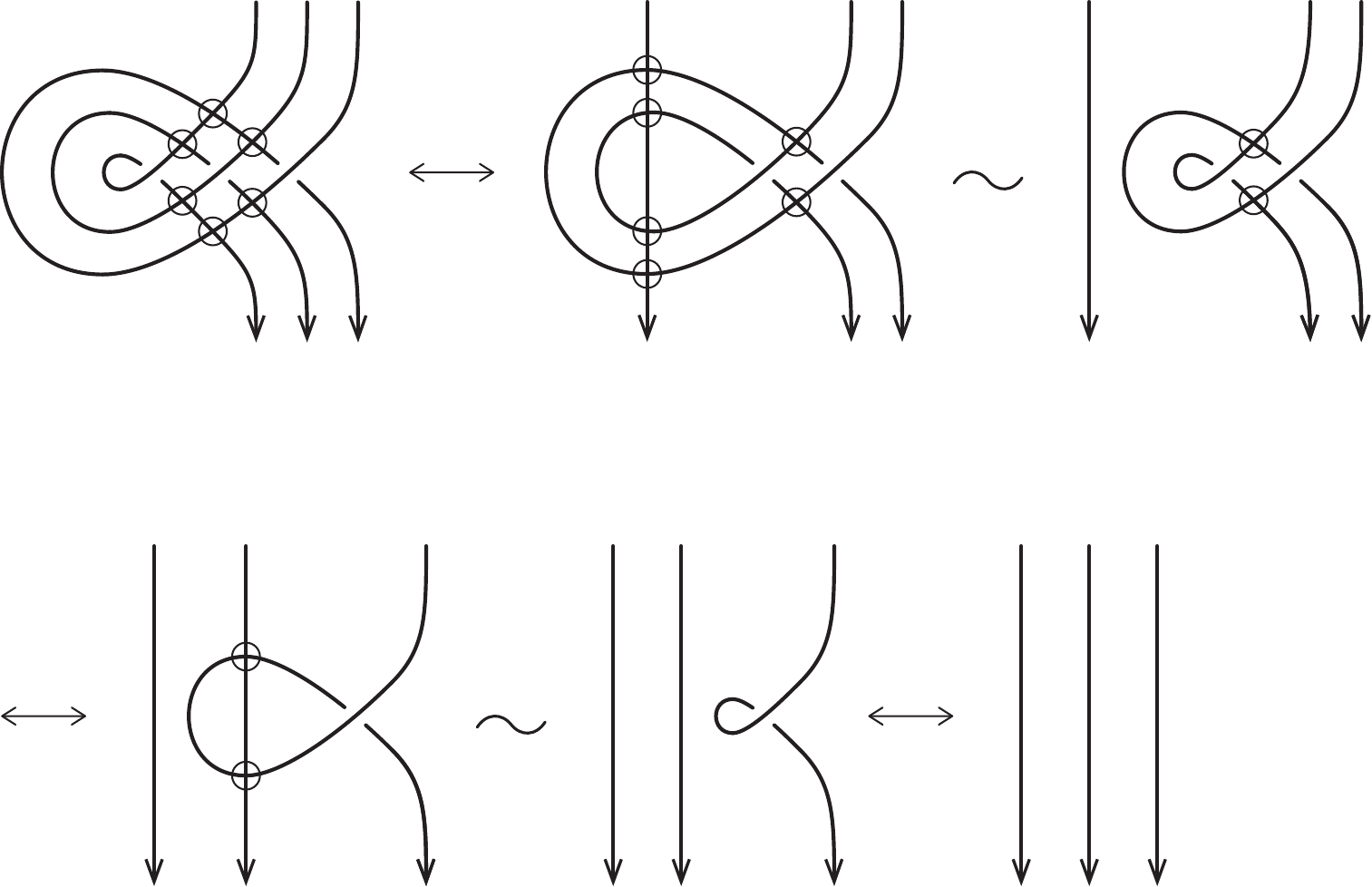}
    \put(85,152){R1a}
    \put(0,39.3){R1a}
    \put(180,39.3){R1a}
    \put(22,96){$L(D;3)$}
    \put(208.5,-16.7){$L(D';3)$}
  \end{overpic}
\vspace{1em}
\caption{Proof of Lemma~\ref{lem-R1} for $r=3$}
\label{pf-lem-R1}
\end{figure}

\begin{lemma}\label{lem-R2}
If two virtual knot diagrams $D$ and $D'$ are related by an R2a move, 
then $L(D;r)$ and $L(D';r)$ are equivalent for any integer $r$ with $r\geq2$. 
\end{lemma}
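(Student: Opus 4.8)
The plan is to mimic the proof of Lemma~\ref{lem-R1} as closely as possible: since an R2a move is a local move involving exactly two real crossings (one positive, one negative) and two parallel strands, after $r$-multiplexing each of these two crossings becomes a block of $r$ real crossings together with $r^{2}-r$ virtual crossings, and the two strands become two bundles of $r$ parallel copies each. So the first step is to draw the local picture of $L(D;r)$ near the R2a disk and identify it as a tangle $T$ built from two such crossing-blocks stacked one above the other.

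Next I would argue that, after sliding the virtual crossings around using VR1--VR4 (in particular detour moves), the two blocks can be brought into a position where the $r$ positive crossings of the upper block sit directly above the $r$ negative crossings of the lower block in $r$ matched pairs, each pair involving the $i$th copy of one strand over/under the $i$th copy of the other strand, with all the remaining crossings virtual. I would then apply $r$ ordinary R2 moves (one per matched pair) to cancel these $r$ pairs of real crossings, after which only virtual crossings remain; a further sequence of VR1--VR4 moves cleans these up and yields the local picture of $L(D';r)$, where $D'$ has $r$ bundles of trivially parallel strands. As in Lemma~\ref{lem-R1}, I would present this as a figure labeled ``Proof of Lemma~\ref{lem-R2} for $r=3$'' and remark that the general $r$ case is analogous, the only change being the number of matched pairs and virtual crossings.

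The main obstacle is the bookkeeping of the virtual crossings: one must check that the $r^{2}-r$ virtual crossings coming from each block, together with the crossings introduced where the two bundles overlap, can genuinely be routed by detour moves so that the real crossings line up in the correct $r$ matched pairs and nothing obstructs the $r$ applications of R2. Concretely, the worry is that the $i$th copy of the top strand in the upper block and the $i$th copy of the top strand in the lower block might be separated by other copies that one cannot detour past without creating or using a real crossing. Because virtual crossings may always be passed over or under any strand (the detour move is unrestricted), this routing is in fact always possible, but it is exactly the point that needs the careful picture. As in Lemma~\ref{lem-R1}, I expect that exhibiting the explicit sequence for $r=3$ makes the pattern transparent, and the reader can extrapolate; the formal statement ``shown similarly'' covers all $r\geq 2$.
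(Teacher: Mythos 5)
Your proposal takes essentially the same approach as the paper: the paper's proof is precisely an explicit figure for $r=3$ exhibiting a sequence of three R2a moves together with several VR2 moves relating $L(D;3)$ and $L(D';3)$, with the general case declared analogous, which is exactly the plan you describe. The only cosmetic difference is direction and bookkeeping: the paper builds the two crossing blocks starting from the crossingless side using only VR2 moves (so no delicate detour routing past real crossings is needed), whereas you cancel the blocks pairwise, but the two sequences are inverse to each other and equally valid.
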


\begin{proof}
The result for the case $r=3$ follows from Figure~\ref{pf-lem-R2}, 
which shows that $L(D;3)$ and $L(D';3)$ are related by a finite sequence of three R2a moves and several VR2 moves. 
The other cases are shown similarly. 
\end{proof}

\begin{figure}[htbp]
\centering
  \begin{overpic}[width=12cm]{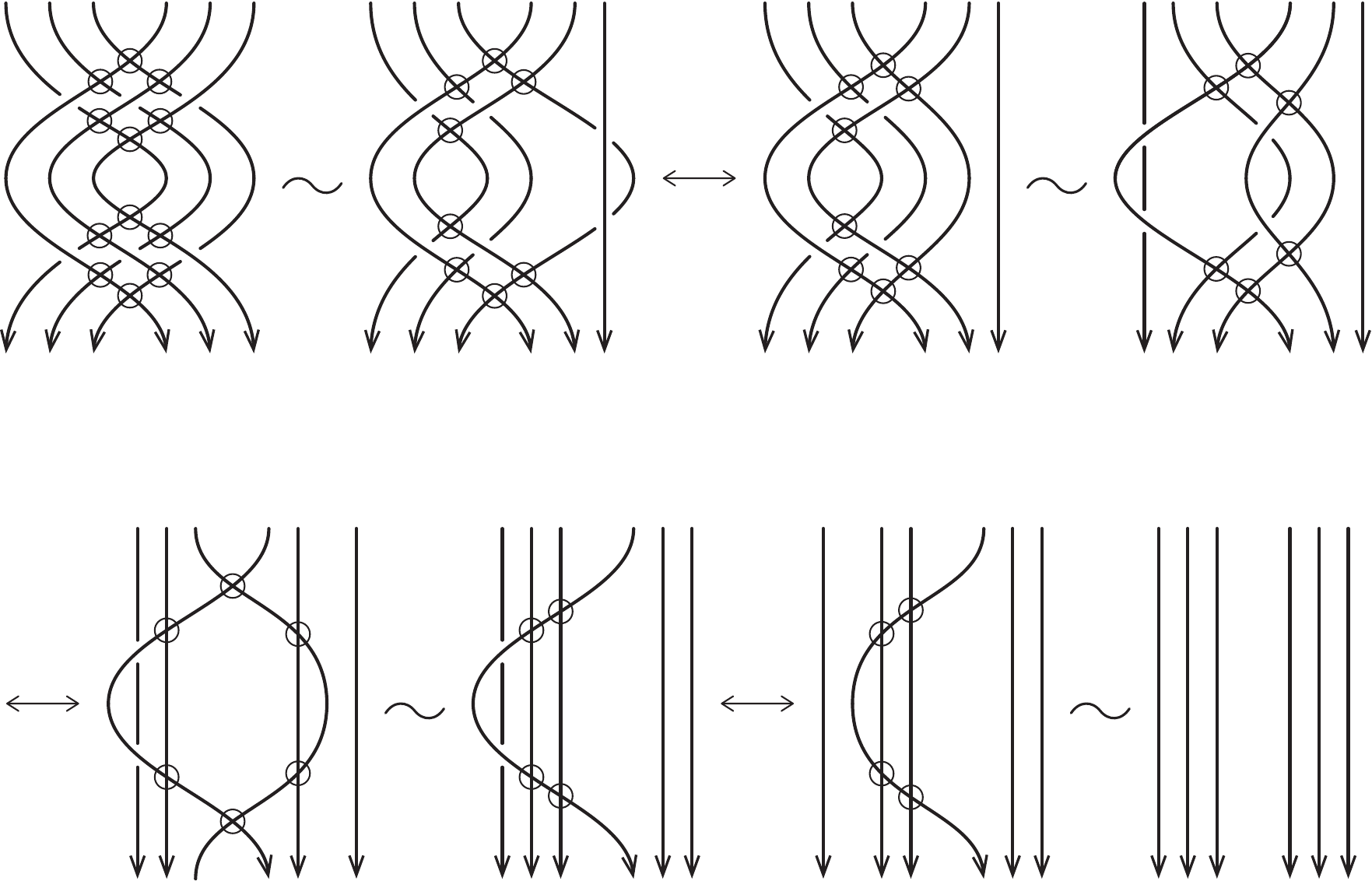}
    \put(165.3,178.3){R2a}
    \put(2,47.5){R2a}
    \put(179.8,47.5){R2a}
    \put(16,114){$L(D;3)$}
    \put(294,-17){$L(D';3)$}
  \end{overpic}
\vspace{1em}
\caption{Proof of Lemma~\ref{lem-R2} for $r=3$}
\label{pf-lem-R2}
\end{figure}

\begin{lemma}\label{lem-R3}
If two virtual knot diagrams $D$ and $D'$ are related by an R3a move, 
then $L(D;r)$ and $L(D';r)$ are equivalent for any integer $r$ with $r\geq2$. 
\end{lemma}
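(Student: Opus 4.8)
The plan is to mimic the proofs of Lemmas~\ref{lem-R1} and~\ref{lem-R2}: the $r$-multiplexing is defined locally, crossing by crossing, so an R3a move performed inside a virtual knot diagram $D$ affects only the portion of $L(D;r)$ coming from the three real crossings involved, together with the $r^{2}-r$ virtual crossings attached to each. As in the previous two lemmas, I would first reduce to the case $r=3$, remarking that the general case follows by exactly the same picture with $r$ parallel strands in place of $3$; the combinatorics of which strand passes over which is dictated entirely by the local replacement rule of Figure~\ref{multiplexing} and does not depend on $r$.

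First I would draw $L(D;3)$ and $L(D';3)$ in the neighbourhood of the R3a triangle. On each side we have $3$ parallel copies of each of the three arcs, so there are $9$ strands in the picture; the three real crossings of the R3a configuration each expand into $3$ real crossings and $6$ virtual crossings, and the goal is a local move turning one bundle of $9$ strands into the other. The key step is to produce an explicit finite sequence of moves realizing this: I expect it to consist of three R3a moves (one ``diagonal'' R3a for each of the three parallel sheets, moving the triple point of sheet $k$ of one arc past the corresponding sheets of the other two) together with a collection of VR2 moves (and possibly VR3 moves) used to slide the accompanying virtual crossings out of the way before each R3a and back afterwards. This is precisely the pattern of Figures~\ref{pf-lem-R1} and~\ref{pf-lem-R2}, just with a more crowded picture. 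So the proof will again be ``the result for $r=3$ follows from Figure~\ref{pf-lem-R3}, which shows that $L(D;3)$ and $L(D';3)$ are related by a finite sequence of three R3a moves and several VR2 (and VR3) moves; the other cases are shown similarly.''

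The main obstacle is bookkeeping rather than anything conceptual: with $9$ strands and many virtual crossings, one must check that at each stage the three parallel sheets can in fact be separated so that a genuine R3a move is available, i.e. that the virtual crossings can always be pushed aside by VR2/VR3 moves (never obstructed by a real crossing). Because virtual crossings interact with everything only through VR1--VR4, and because the real crossings in the local picture are exactly the $3$ images of each original real crossing lying on matched pairs of parallel sheets, this separation is always possible; but verifying it is the step that requires care in drawing Figure~\ref{pf-lem-R3}. Once that figure is in hand, the statement for general $r\geq2$ is immediate by the same argument with $r$ sheets.
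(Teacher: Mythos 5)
Your overall strategy is the same as the paper's: work locally around the R3a triangle, treat $r=3$ via an explicit figure, and realize the move on $L(D;3)$ by three R3a moves together with virtual Reidemeister moves, the general $r$ being analogous. However, there is a genuine flaw in the one step you yourself identify as the crux. You assert that the three parallel sheets can be separated so that each R3a move becomes available using only VR2/VR3 moves, ``never obstructed by a real crossing.'' That is not the case. In the multiplexed picture each of the three original crossings expands into a $3\times3$ block with $3$ real and $6$ virtual crossings, and the triangle spanned by the three real crossings of any one sheet has strands of the other sheets running across its edges; these strands are pinned at the corners, where they meet the \emph{real} crossings of that sheet virtually. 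To clear the triangle you must slide purely virtual strands across real crossings, which is exactly the mixed move VR4 (the detour move past a real crossing), not VR2 or VR3. This is reflected in the paper's actual proof: Figure~\ref{pf-lem-R3} uses three R3a moves together with VR3 \emph{and VR4} moves, in contrast with Lemmas~\ref{lem-R1} and~\ref{lem-R2}, where VR2 moves suffice. So the R3a case is not just ``the same pattern with a more crowded picture''; the need for VR4 is the essentially new feature, and a sequence of the kind you describe (R3a plus VR2/VR3 only) cannot realize the deformation.

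The fix is simply to allow VR4 moves in your sliding step: before performing the R3a move on a given sheet, push the virtual strands of the other two sheets off the triangle by VR3 moves among themselves and VR4 moves past the real corner crossings, perform the R3a, and then restore the virtual crossings. With that correction your argument matches the paper's proof of Lemma~\ref{lem-R3}.
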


\begin{proof}
The result for the case $r=3$ follows from Figure~\ref{pf-lem-R3}, 
which shows that $L(D;3)$ and $L(D';3)$ are related by a finite sequence of three R3a moves and several VR3 and VR4 moves. 
The other cases are shown similarly. 
\end{proof}

\begin{figure}[htbp]
\centering
  \begin{overpic}[width=12cm]{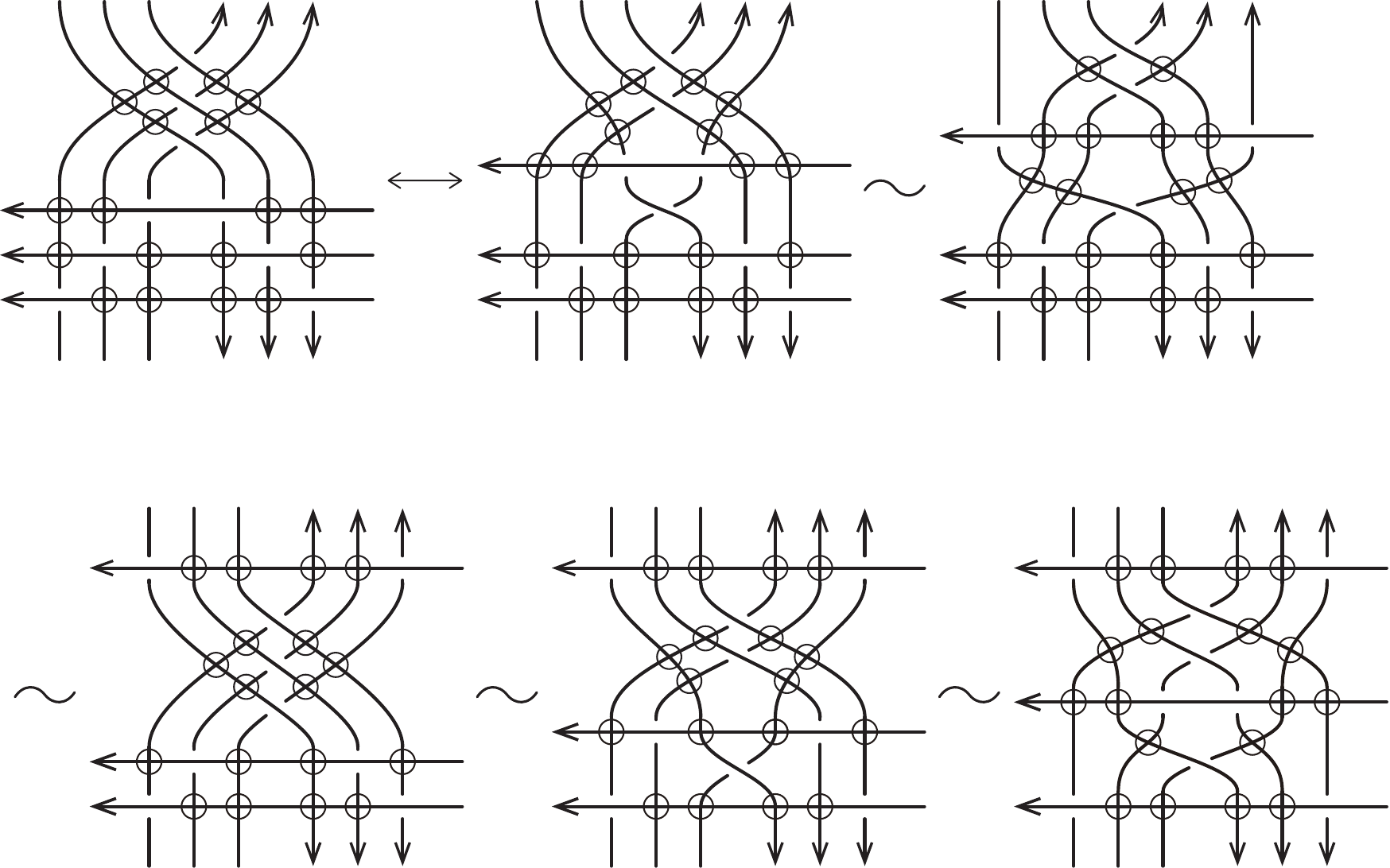}
    \put(95.5,172.5){R3a}
    \put(30,109.5){$L(D;3)$}
  \end{overpic}

\vspace{3.5em}

  \begin{overpic}[width=12cm]{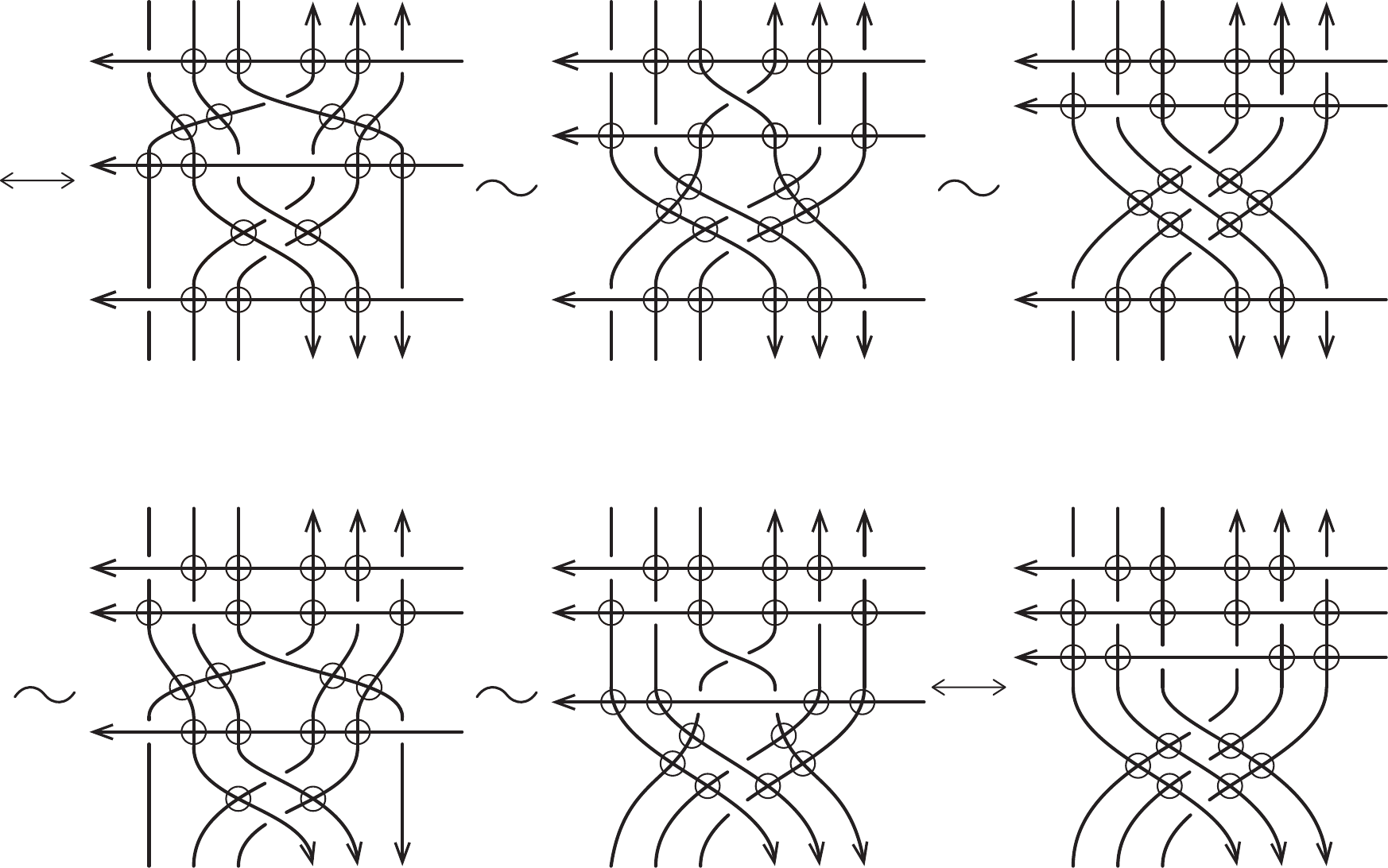}
    \put(0.5,173){R3a}
    \put(229.5,48.3){R3a}
    \put(278,-14.5){$L(D';3)$}
  \end{overpic}
\vspace{1em}
\caption{Proof of Lemma~\ref{lem-R3} for $r=3$}
\label{pf-lem-R3}
\end{figure}

\begin{lemma}\label{lem-M1toM4}
Let $r$ be an integer with $r\geq2$. 
Each of the four moves $M_{1}(r)$--$M_{4}(r)$ in Figure~\ref{M1toM4} is realized by a finite sequence of virtual Reidemeister moves. 
\end{lemma}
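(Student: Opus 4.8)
The statement asserts that four specific local moves $M_1(r)$–$M_4(r)$ — which from the context are the auxiliary moves on collections of $r$ parallel strands that arise when analyzing what the $r$-multiplexing does to the virtual Reidemeister move VR4 — can each be effected using only virtual Reidemeister moves VR1–VR4. Since the excerpt breaks off before Figure~\ref{M1toM4} is visible, I describe the approach at the level one expects for such "cabling" lemmas. The guiding principle is that all four moves involve only virtual crossings: each $M_k(r)$ takes a bundle of $r$ parallel strands passing (virtually) over or under a single strand, or two bundles crossing each other virtually, and rearranges it. Because virtual crossings behave like "strand permutations" (a virtual crossing just records that two strands cross in the plane with no over/under data), the entire calculus of such moves is governed by the fact that VR1–VR4 generate the moves needed to detour any arc across a region containing only virtual crossings — this is precisely the content of the detour move, which is a standard consequence of VR1–VR4.

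\textbf{Plan.} First I would fix notation: label the $r$ parallel strands $1,\dots,r$ in each bundle and describe each $M_k(r)$ as an identity between two diagrams built out of virtual crossings only. Second, I would invoke (or, if the paper has not yet stated it, briefly recall and prove from VR1–VR4) the \emph{detour move}: any arc of a diagram whose interior meets the rest of the diagram only in virtual crossings may be replaced by any other arc with the same endpoints, again meeting the rest of the diagram only in virtual crossings, and the two diagrams are related by a finite sequence of VR1–VR4. Granting the detour move, each $M_k(r)$ is handled by an induction on $r$: the base case $r=2$ is a short explicit sequence of VR2, VR3, VR4 moves (drawn in the figure), and the inductive step slides the $(r{+}1)$st strand — which participates only in virtual crossings — across the relevant region by a detour, reducing to the case of $r$ strands. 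Finally I would note that $M_1(r)$–$M_4(r)$ are stated so as to be mirror/reverse variants of one another, so it suffices to verify one or two of them in detail and obtain the rest by reflecting the diagrams and reversing orientations, exactly as was done in the proof of Lemma~\ref{lem-R1}.

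\textbf{Main obstacle.} The genuine content is entirely at the level of the base case $r=2$: one must exhibit, for each of the four moves, an honest finite sequence of VR1–VR4 moves transforming one side into the other, and these sequences are somewhat delicate to read off because several virtual crossings must be pushed past one another in the correct order (this is the role of VR3 and VR4 as opposed to mere VR2 reshuffling). Once the $r=2$ pictures are in hand, the passage to general $r$ via the detour move is routine bookkeeping. So the proof reduces to drawing four figures and checking that each step in each figure is an instance of VR1, VR2, VR3, or VR4; I would present these figures and let them carry the argument, as is standard for this kind of diagrammatic lemma.
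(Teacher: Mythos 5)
Your proposal rests on a misreading of what the moves $M_{1}(r)$--$M_{4}(r)$ are. They are not configurations ``involving only virtual crossings'': the caption of Figure~\ref{M1toM4} states that each move involves $r$ \emph{real} crossings and $r^{2}-1$ virtual crossings. Concretely, each side of $M_{i}(r)$ consists of the multiplexed real crossing of two $r$-strand bundles ($r$ real crossings and $r^{2}-r$ virtual ones) together with a virtual permutation braid of $r-1$ crossings on one bundle end, and the move pushes that braid through the multiplexed crossing. Because of this, the ``guiding principle'' of your argument fails: every strand of both bundles participates in exactly one real crossing inside the region, so there is no strand ``which participates only in virtual crossings'' to detour, and your inductive step (slide the $(r{+}1)$st strand across by a detour move) does not apply as stated. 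The role of VR4 here is not to shuffle virtual crossings past each other but precisely to slide a virtual arc across a genuine real crossing; the paper's proof realizes each $M_{i}(3)$ by an explicit sequence of VR3 and VR4 moves (Figure~\ref{pf-lem-M1toM4}) and notes that the other values of $r$ are handled similarly.

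A detour-style argument could in principle be made to work --- the two sides of each $M_{i}(r)$ have identical real-crossing data, so one could argue they differ only by rerouting arcs through virtual crossings --- but that requires checking carefully that the pairing, signs, over/under information and endpoint connectivity of the $r$ real crossings agree on both sides, which is essentially the same diagrammatic verification the paper carries out. As written, your proof also defers its own ``genuine content'' (the explicit base-case sequences) to figures you do not supply, and those figures would not look as you predict, since they must interleave VR4 moves at the real crossings rather than pure VR2/VR3 reshuffling. So the proposal contains a genuine gap: the premise that the moves are purely virtual is false, and the inductive detour step would fail on the actual configurations.
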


\begin{figure}[htbp]
\vspace{1em}
\centering
  \begin{overpic}[width=9cm]{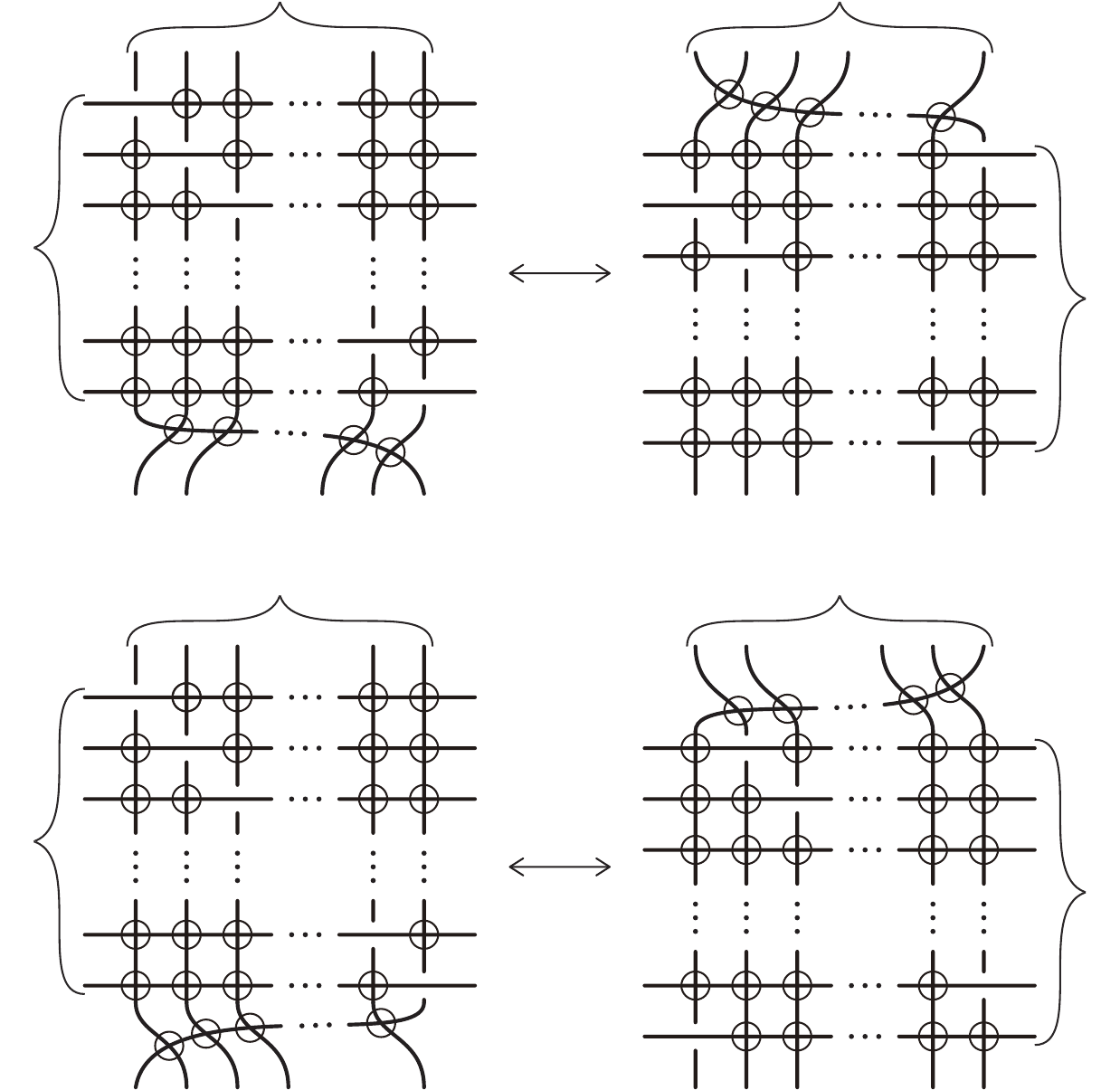}
    \put(114.4,192.5){$M_{1}(r)$}
    \put(114.4,57.5){$M_{2}(r)$}
    \put(62,250.8){$r$}
    \put(190,250.8){$r$}
    \put(1.4,190.8){$r$}
    \put(251,179){$r$}
    \put(62,115){$r$}
    \put(190,115){$r$}
    \put(1.4,55){$r$}
    \put(251,43.2){$r$}
  \end{overpic}

\vspace{2em}

  \begin{overpic}[width=9cm]{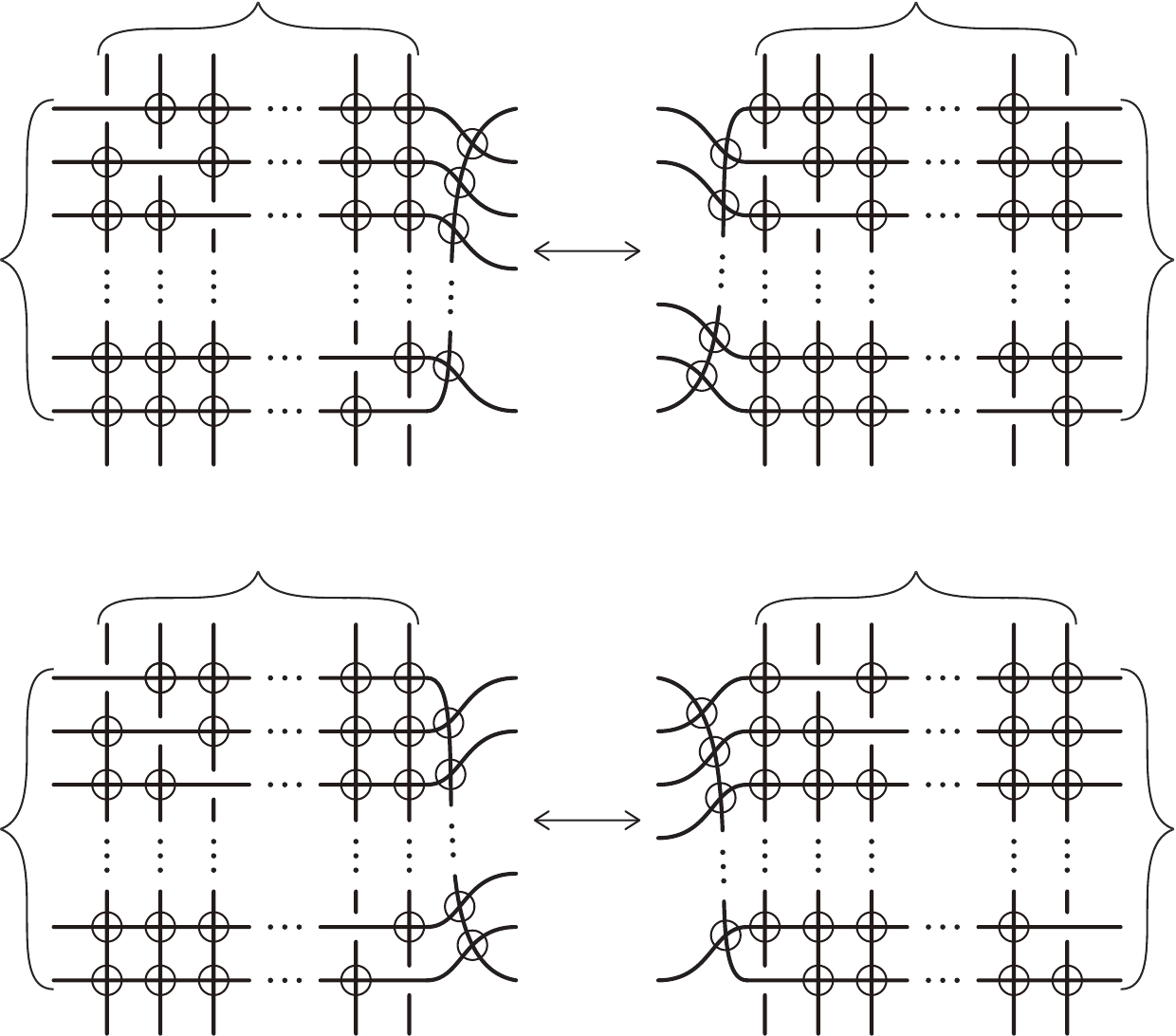}
    \put(115,177){$M_{3}(r)$}
    \put(114.4,53){$M_{4}(r)$}
    \put(54.2,227.3){$r$}
    \put(197.5,227.3){$r$}
    \put(-6.3,167.3){$r$}
    \put(257.5,167.3){$r$}
    \put(54.2,103.3){$r$}
    \put(197.5,103.3){$r$}
    \put(-6.3,43.3){$r$}
    \put(257.5,43.3){$r$}
   \end{overpic}
\caption{Moves $M_{1}(r)$--$M_{4}(r)$, each of which involves $r$ real crossings and $r^{2}-1$ virtual crossings}
\label{M1toM4}
\end{figure}

\begin{proof}
The result for the case $r=3$ follows from Figure~\ref{pf-lem-M1toM4}, 
which shows that for each $i\in\{1,2,3,4\}$, 
an $M_{i}(3)$ move is realized by a finite sequence of several VR3 and VR4 moves. 
The other cases are shown similarly. 
\end{proof}

\begin{figure}[htbp]
\vspace{2em}
\centering
  \begin{overpic}[width=11cm]{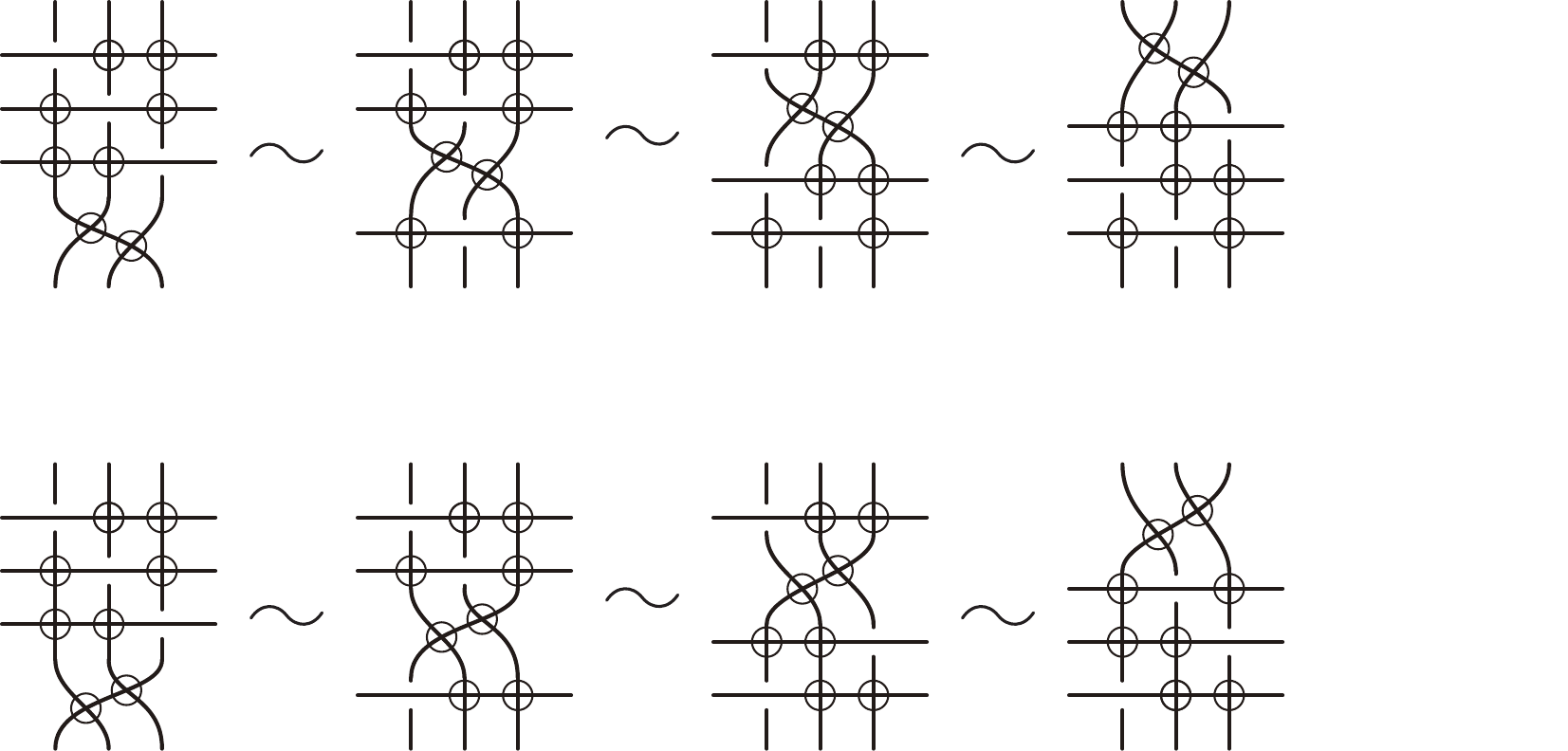}
    \put(0,159){\underline{$M_{1}(3)$ move}}
    \put(0,66.7){\underline{$M_{2}(3)$ move}}
  \end{overpic}
  
\vspace{3.5em}
  
  \begin{overpic}[width=11cm]{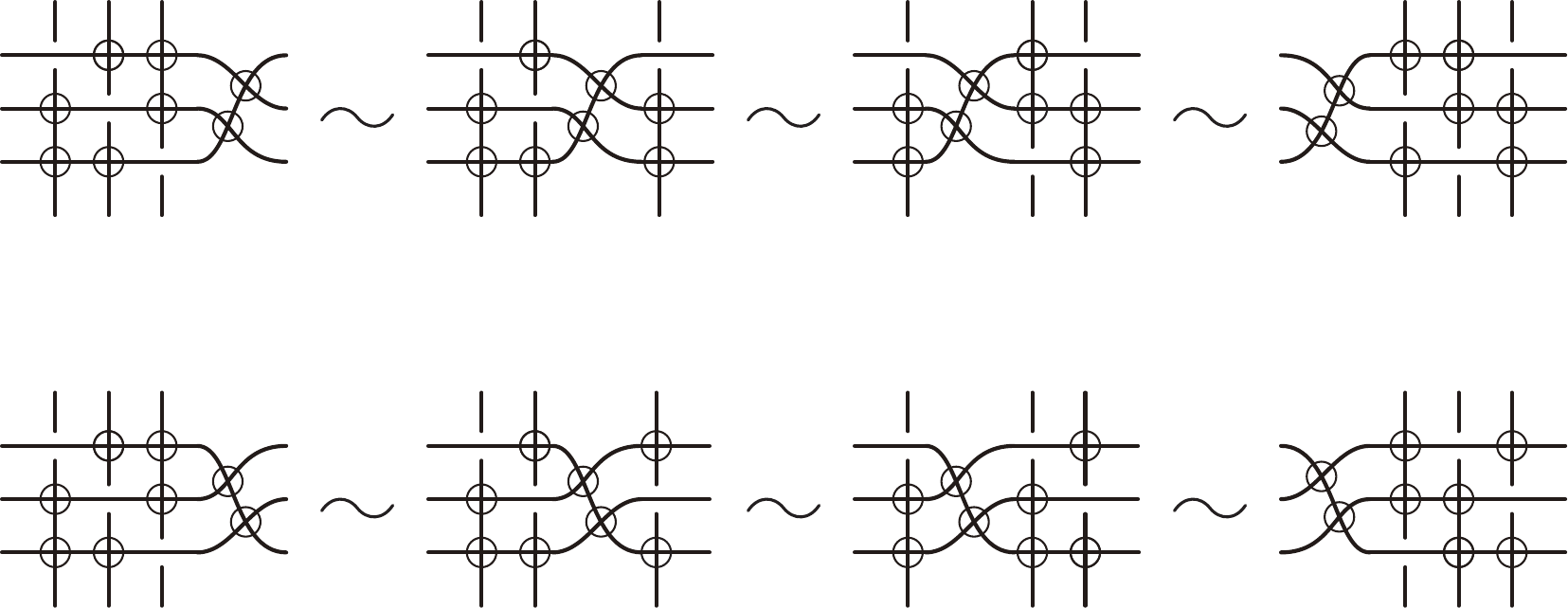}
    \put(0,130.5){\underline{$M_{3}(3)$ move}}
    \put(0,52.4){\underline{$M_{4}(3)$ move}}
  \end{overpic}
\caption{Proof of Lemma~\ref{lem-M1toM4} for $r=3$}
\label{pf-lem-M1toM4}
\end{figure}

Let $M_{5}(r)$, $M_{6}(r)$, $M_{7}(r)$ and $M_{8}(r)$ be the local moves 
obtained from $M_{1}(r)$, $M_{2}(r)$, $M_{3}(r)$ and $M_{4}(r)$, respectively, by changing the over/under information at every real crossing. 
For these moves, we can obtain a similar result to Lemma~\ref{lem-M1toM4} as follows:  

\begin{lemma}\label{lem-M5toM8}
Let $r$ be an integer with $r\geq2$. 
Each of the four moves $M_{5}(r)$--$M_{8}(r)$ is realized by a finite sequence of virtual Reidemeister moves. 
\qed
\end{lemma}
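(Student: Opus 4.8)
The plan is to derive Lemma~\ref{lem-M5toM8} from Lemma~\ref{lem-M1toM4} by switching the over/under information at every real crossing, exploiting the fact that the virtual Reidemeister moves VR1--VR4 involve only virtual crossings.

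Fix $i\in\{1,2,3,4\}$ and an integer $r\geq2$. By Lemma~\ref{lem-M1toM4}, there is a finite sequence of (tangle) diagrams
\[
E_{0}\to E_{1}\to\cdots\to E_{k},
\]
where $E_{0}$ and $E_{k}$ are the two sides of the move $M_{i}(r)$ and each arrow is a single virtual Reidemeister move. Let $s$ denote the operation that switches the over/under information at every real crossing of a diagram; it fixes all virtual crossings. I would then verify that for each $j$ the diagrams $s(E_{j})$ and $s(E_{j+1})$ are related by the same virtual Reidemeister move that relates $E_{j}$ and $E_{j+1}$. Indeed, that move is supported in a disk $\Delta_{j}$ containing only virtual crossings, so $s$ restricts to the identity on $\Delta_{j}$; hence $s(E_{j})$ and $s(E_{j+1})$ still agree outside $\Delta_{j}$, while inside $\Delta_{j}$ they look like $E_{j}$ and $E_{j+1}$, so the same VR move relates them. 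Therefore
\[
s(E_{0})\to s(E_{1})\to\cdots\to s(E_{k})
\]
is again a finite sequence of virtual Reidemeister moves.

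It remains to note that $s(E_{0})$ and $s(E_{k})$ are precisely the two sides of $M_{i+4}(r)$: this is exactly how $M_{i+4}(r)$ was defined, namely as the move obtained from $M_{i}(r)$ by changing the over/under information at every real crossing. This establishes the lemma for $M_{5}(r)$, $M_{6}(r)$, $M_{7}(r)$ and $M_{8}(r)$ simultaneously, for every $r\geq2$. I do not anticipate a genuine obstacle; the only point needing (routine) care is the commutation of $s$ with a virtual Reidemeister move, which is immediate from the locality of the moves together with the absence of real crossings in VR1--VR4.
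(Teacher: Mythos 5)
Your overall strategy is exactly the one the paper has in mind (the paper marks the lemma with \qed precisely because it follows from Lemma~\ref{lem-M1toM4} by the over/under symmetry you describe), but your justification of the key commutation step contains a factual error. You assert that the virtual Reidemeister moves ``involve only virtual crossings'' and that each disk $\Delta_{j}$ supporting a move ``contains only virtual crossings.'' This is true for VR1--VR3 but false for VR4: the VR4 move is the mixed move, whose local picture contains one \emph{real} crossing and two virtual crossings (see Figure~\ref{gReid}), and the realizations of $M_{1}(r)$--$M_{4}(r)$ in the proof of Lemma~\ref{lem-M1toM4} do use VR4 moves. So for those steps the crossing-switch operation $s$ is \emph{not} the identity on $\Delta_{j}$, and your stated reason for why $s(E_{j})\to s(E_{j+1})$ is still a legal move does not apply.

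The gap is easily closed, and the conclusion of your commutation step is correct: if $E_{j}\to E_{j+1}$ is a VR4 move, then $s$ switches the single real crossing in $\Delta_{j}$ in the same way on both sides, and the resulting local pictures are again the two sides of a VR4 move, because VR4 is available for either choice of over/under information at its real crossing (both versions appear among the generalized Reidemeister moves). With that observation replacing the claim that VR1--VR4 are free of real crossings, your argument goes through and gives the lemma for $M_{5}(r)$--$M_{8}(r)$ for all $r\geq2$, in the same way the paper intends. Please adjust the wording accordingly; as written, the sentence ``the absence of real crossings in VR1--VR4'' is simply incorrect and a referee would flag it.
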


\begin{lemma}\label{lem-VR4}
If two virtual knot diagrams $D$ and $D'$ are related by a VR4 move, 
then $L(D;r)$ and $L(D';r)$ are equivalent for any integer $r$ with $r\geq2$. 
\end{lemma}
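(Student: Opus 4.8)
The plan is to analyze the VR4 move by type according to the crossing that is pushed across the virtual crossing. Recall that a VR4 move slides a strand containing one crossing $c$ of $D$ past a virtual crossing of $D$. After $r$-multiplexing, the virtual crossing being crossed becomes a block of $r^2+2r-2$ virtual crossings, and the crossing $c$ becomes a block of $r$ real crossings (of the same sign as $c$) together with $r^2-r$ virtual crossings. Sliding the multiplexed $r$-strand cable carrying this block across the multiplexed virtual block is exactly the kind of operation captured by the moves $M_1(r)$--$M_8(r)$ of Lemmas~\ref{lem-M1toM4} and~\ref{lem-M5toM8}: each $M_k(r)$ pushes a bundle of $r$ real crossings and $r^2-1$ virtual crossings across (morally) a virtual obstruction, using only virtual Reidemeister moves.

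First I would set up the picture for $r=3$ and verify that, depending on the relative positions of the strands and orientations, one instance of a multiplexed VR4 move decomposes into a single application of one of $M_1(r),\dots,M_4(r)$ (when $c$ is a positive crossing) or one of $M_5(r),\dots,M_8(r)$ (when $c$ is a negative crossing), together with a bounded number of bookkeeping moves VR1--VR4 that merely rearrange the remaining virtual crossings of the two multiplexed blocks. Since, by Lemmas~\ref{lem-M1toM4} and~\ref{lem-M5toM8}, each $M_k(r)$ is itself a finite sequence of virtual Reidemeister moves, the composite deformation relating $L(D;r)$ and $L(D';r)$ consists entirely of virtual Reidemeister moves, so in particular the two diagrams are equivalent. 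This handles $r=3$; the general case $r\geq2$ is identical in structure, with the $M_k(r)$ moves for the appropriate $r$.

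The second point to address is completeness: there are several geometrically distinct VR4 moves according to the over/under strand of $c$, the sign of $c$, which of the two arcs at $c$ is the one being slid, and the over/under role of the sliding strand at the virtual crossing. I would observe that the over/under role of the sliding strand at the virtual crossing is irrelevant after multiplexing (all of that becomes virtual crossings handled by VR3/VR4), while the sign of $c$ and the choice of arc at $c$ are precisely what the eight moves $M_1(r)$--$M_8(r)$ are designed to cover — four for each sign. A short case check, most cleanly displayed in a figure paralleling Figure~\ref{pf-lem-M1toM4}, confirms that every VR4 instance falls into one of these eight patterns (up to planar isotopy and the already-established moves VR1--VR4 of the ambient blocks).

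The main obstacle I expect is the bookkeeping in matching a given VR4 instance to the correct $M_k(r)$: one must be careful that the cyclic pattern of how the $r$ copies of the over-strand interleave with the $r$ copies of the under-strand in the multiplexed crossing block agrees with the interleaving pattern built into $M_k(r)$, and that the $r^2-r$ versus $r^2-1$ virtual-crossing counts reconcile after absorbing the $r-1$ ``extra'' virtual crossings from the multiplexed virtual block. This is routine but must be done carefully; once the correspondence is pinned down for one representative instance, the remaining instances follow by symmetry (reversing orientations, reflecting the diagram, or swapping over/under), exactly as in the proofs of Lemmas~\ref{lem-R1}--\ref{lem-R3}.
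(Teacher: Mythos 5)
Your overall strategy coincides with the paper's: decompose the multiplexed VR4 move into the moves $M_{1}(r)$--$M_{8}(r)$ plus virtual Reidemeister moves, invoke Lemmas~\ref{lem-M1toM4} and~\ref{lem-M5toM8}, and dispose of the eight oriented versions of VR4 by a case check. So the architecture of the argument is the right one, and it is exactly what the paper does (the paper verifies one oriented version by an explicit figure and treats the other seven similarly).

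There is, however, a concrete inaccuracy in your local picture that your proposed decomposition inherits. The VR4 tangle in $D$ contains one real crossing and \emph{two} virtual crossings: the sliding strand meets \emph{both} strands of the real crossing virtually. After multiplexing there are therefore two blocks of $r^{2}+2r-2$ virtual crossings, not one, and each such block contains, besides the $r^{2}$ cable-versus-cable crossings, the $2(r-1)$ crossings that cyclically permute the strands inside each of the two cables involved. When the moving cable is slid to the other side, its own permutation crossings travel with it by detour-type moves, but the permutation crossings it leaves on \emph{each} of the two stationary cables (the over-cable and the under-cable of the real crossing) end up on the wrong side of the multiplexed real-crossing block and must be pushed through it. Transporting one such shift block through the real-crossing block is precisely what an $M$-move does (this is where the count $r^{2}-1=(r^{2}-r)+(r-1)$ that you noticed comes from), so a single multiplexed VR4 instance needs \emph{two} $M$-moves, one per stationary cable; indeed the paper's figure uses an $M_{1}(3)$ move \emph{and} an $M_{3}(3)$ move for the displayed version. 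Your claim that one $M_{k}(r)$ plus bookkeeping moves that ``merely rearrange the remaining virtual crossings of the two multiplexed blocks'' suffices would not survive drawing the picture, since the second shift block cannot cross the real crossings by mere rearrangement of virtual blocks. The repair stays entirely within your framework (use two $M$-moves per instance, with the sign of the real crossing selecting $M_{1}$--$M_{4}$ versus $M_{5}$--$M_{8}$), after which the treatment of the remaining oriented versions by symmetry matches the paper.
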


\begin{proof}
We only prove the result for the case $r=3$; 
the other cases are shown similarly. 

There are eight versions of oriented VR4 moves. 
Assume that $D$ and $D'$ are related by an oriented VR4 move as shown in the top of Figure~\ref{pf-lem-VR4}. 
The bottom of this figure shows that $L(D;3)$ and $L(D';3)$ are related by a finite sequence of an $M_{1}(3)$ move, an $M_{3}(3)$ move and several VR2, VR3 and VR4 moves. 
Therefore $L(D;3)$ and $L(D';3)$ are equivalent by Lemma~\ref{lem-M1toM4}. 
Using this lemma together with Lemma~\ref{lem-M5toM8}, we can similarly prove that if $D$ and $D'$ are related by one of the remaining seven  oriented VR4 moves, 
then $L(D;3)$ and $L(D';3)$ are equivalent. 
\end{proof}

\begin{figure}[htbp]
\centering
  \begin{overpic}[width=12.5cm]{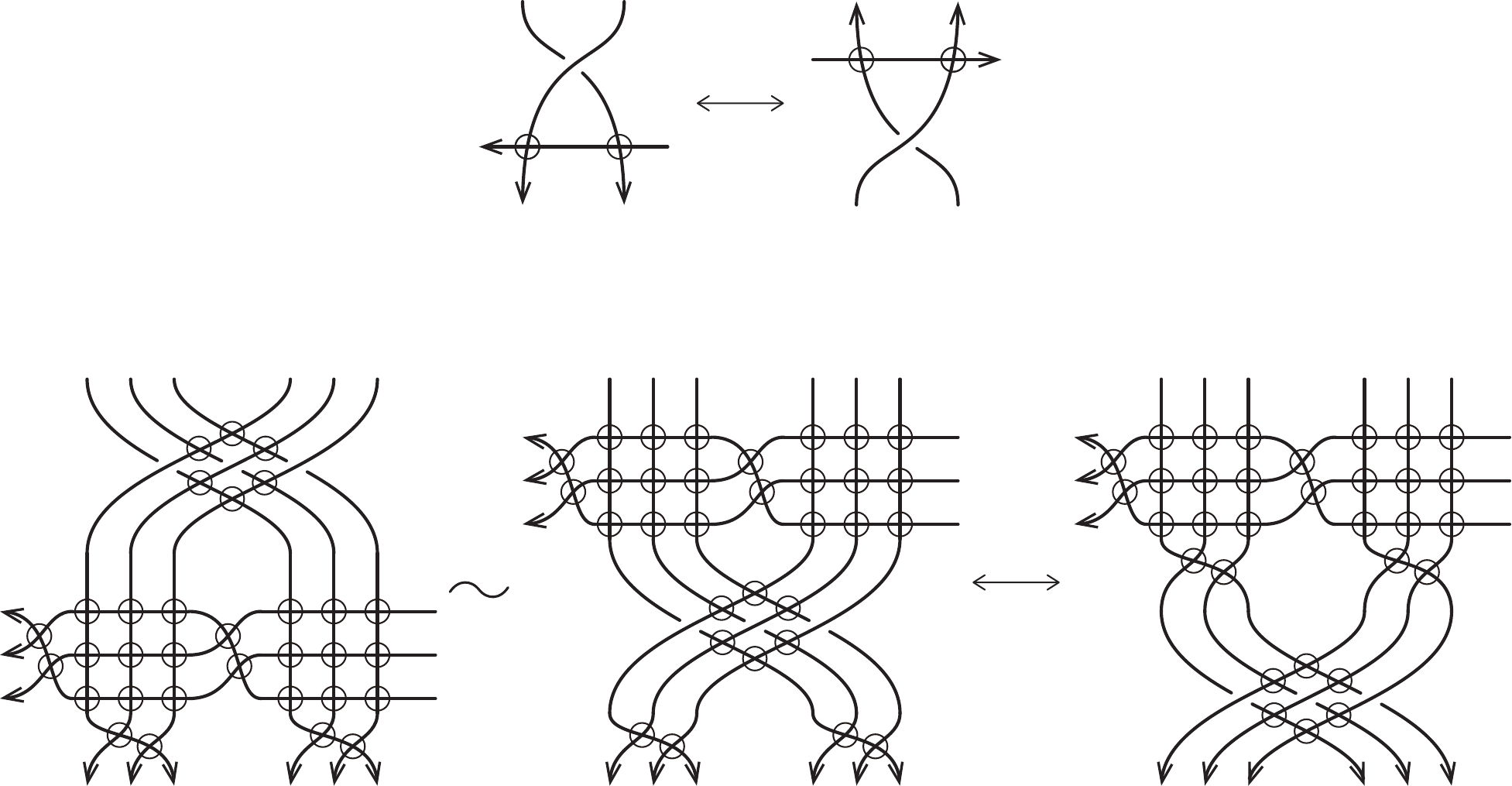}
    \put(131,123){$D$}
    \put(208,123){$D'$}
    \put(164,164.5){VR4}
    \put(226,53){$M_{1}(3)$}
    \put(226,36.5){$M_{3}(3)$}
    \put(39,-14){$L(D;3)$}
    \put(290,-14){$L(D';3)$}
  \end{overpic}
\vspace{1em}
\caption{Proof of Lemma~\ref{lem-VR4} for $r=3$}
\label{pf-lem-VR4}
\end{figure}

We are now ready to prove Theorem~\ref{thm-multiplexing}. 

\begin{proof}[Proof of Theorem~\ref{thm-multiplexing}]
It suffices to show that if $D$ and $D'$ are related by a generalized Reidemeister move, 
then $L(D;r)$ and $L(D';r)$ are equivalent for any $r\geq2$. 

Polyak's result in~\cite[Theorem~1.1]{Pol} says that
all the classical Reidemeister moves R1--R3 can be generated by the four moves R1a, R1b, R2a and R3a. 
Therefore if $D$ and $D'$ are related by a classical Reidemeister move, 
then $L(D;r)$ and $L(D';r)$ are equivalent by Lemmas~\ref{lem-R1}, \ref{lem-R2} and \ref{lem-R3}. 

We can see that 
if $D$ and $D'$ are related by one of the virtual Reidemeister moves VR1--VR3, 
then $L(D;r)$ and $L(D';r)$ are related by a finite sequence of moves VR1--VR3. 
Thus they are equivalent. 

If $D$ and $D'$ are related by a virtual Reidemeister move V4, 
then it follows from Lemma~\ref{lem-VR4} that $L(D;r)$ and $L(D';r)$ are equivalent. 
\end{proof}

\begin{definition}
Let $K$ be a virtual knot and $r$ an integer with $r\geq2$. 
The $r$-component virtual link represented by $L(D;r)$ constructed from a diagram $D$ of $K$ is called the \emph{$r$-multiplexed virtual link} of $K$.  
It is denoted by $L(K;r)$. 
\end{definition}

The well-definedness of $L(K;r)$ follows from Theorem~\ref{thm-multiplexing}.

\section{Invariants of multiplexed virtual links}\label{sec-inv}
This section is devoted to the proofs of Theorems~\ref{thm-lk}, \ref{thm-self-writhe} and \ref{thm-knot-type}. 

First we review the definition of the $n$-writhe of a virtual knot $K$. 
For a real crossing $c$ of a diagram $D$ of $K$, 
let $\rho$ be the oriented path that proceeds along $D$ from the overcrossing to the undercrossing at $c$. 
We call it the \emph{specified path} of $c$. 
When walking along $\rho$, we count $+1$ for a real crossing on $\rho$ if a string goes across $\rho$ at the real crossing from left to right; 
otherwise, we count $-1$ for it. 
See Figure~\ref{intersection-sign}. 
The \emph{index} of $c$ is the sum of such $\pm1$'s for all the real crossings on $\rho$, 
and it is denoted by $\mathrm{ind}(c)$. 
The indices of real crossings play an important role in this section; 
in fact, they will be used in the proofs of Theorems~\ref{thm-lk}, \ref{thm-self-writhe} and \ref{thm-knot-type}. 

\begin{figure}[htbp]
\centering
  \begin{overpic}[width=10cm]{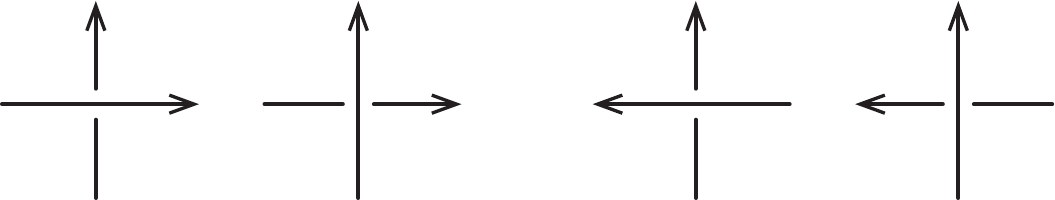}
    \put(29.1,31){$+1$}
    \put(23,-12){$\rho$}
    \put(100,31){$+1$}
    \put(93.6,-12){$\rho$}
    \put(191.5,31){$-1$}
    \put(185,-12){$\rho$}
    \put(262.5,31){$-1$}
    \put(255.5,-12){$\rho$}
  \end{overpic}
\vspace{1em}
\caption{Counting $\pm1$ for a real crossing on the specified path $\rho$}
\label{intersection-sign}
\end{figure}

For an integer $n$, let $J_{n}(D)$ be the sum of the signs of all the real crossings of $D$ with index~$n$. 
Then $J_{n}(D)$ is an invariant of the virtual knot $K$ for any $n\neq0$~\cite[Lemma~2.3]{ST}. 
It is called the \emph{$n$-writhe} of $K$ and denoted by $J_{n}(K)$.

Next we give the definition of the $(i,j)$-linking number of an $r$-component virtual link $L=K_{1}\cup\dots\cup K_{r}$, 
which is one of the most fundamental invariants of $L$.  
Let $D=D_{1}\cup\dots\cup D_{r}$ be a diagram of $L$. 
For integers $i$ and $j$ with $1\leq i\neq j\leq r$, an \emph{$(i,j)$-crossing} of $D$ is a real crossing where $D_{i}$ passes over $D_{j}$. 
Denote by $\mathrm{Lk}(D_{i},D_{j})$ the sum of the signs of all the $(i,j)$-crossings of $D$. 
It is easy to see that $\mathrm{Lk}(D_{i},D_{j})$ is an invariant of $L$; cf.~\cite[Section~1.7]{GPV}. 
We call this invariant the \emph{$(i,j)$-linking number} of $L$ and denote it by $\mathrm{Lk}(K_{i},K_{j})$.

Then we give the proof of Theorem~\ref{thm-lk}. 

\begin{proof}[Proof of Theorem~\ref{thm-lk}]
Let $D$ be a diagram of $K$. 
For integers $i$ and $j$ with $1\leq i\neq j\leq r$, 
we consider an $(i,j)$-crossing $c$ of $L(D;r)=D_{1}\cup\dots\cup D_{r}$. 
Then there is a unique real crossing $c_{0}$ of $D$ producing~$c$. 
Moreover, the signs of $c$ and $c_{0}$ are the same. 
Hence the $(i,j)$-linking number $\mathrm{Lk}(D_{i},D_{j})$ is equal to the sum of the signs of all the real crossings of $D$ 
each of which yields an $(i,j)$-crossing of $L(D;r)$.

Now we will show that $\mathrm{ind}(c_{0})\equiv i-j\pmod{r}$. 
Let $\rho_{0}\subset D$ be the specified path of $c_{0}$. 
When walking along $\rho_{0}$, we count $+1$ for a virtual crossing on $\rho_{0}$ 
if a string goes across $\rho$ at the virtual crossing from left to right; 
otherwise, we count $-1$ for it.  
Denote by $\mathrm{ind}_{v}(c_{0})$ the sum of such $\pm1$'s for all the virtual crossings on $\rho_{0}$. 
Since the $i$th component $D_{i}$ of $L(D;r)$ passes over the $j$th component $D_{j}$ at $c$, 
we have 
\[
\mathrm{ind}_{v}(c_{0})\equiv j-i\pmod{r}.
\] 
In fact, when a real crossing $x_{0}$ of $D$ has $\mathrm{ind}_{v}(x_{0})=k$, 
at any real crossing of $L(D;r)$ derived from $x_{0}$, 
$D_{l}$ passes over $D_{l+k}$ for some $l\in\{1,\dots,r\}$, 
where $l+k$ is taken modulo $r$. 
See Figure~\ref{gap} for an example with $\mathrm{ind}_{v}(x_{0})=2$ and $r=3$. 

\begin{figure}[htbp]
\vspace{1em}
\centering
  \begin{overpic}[width=12cm]{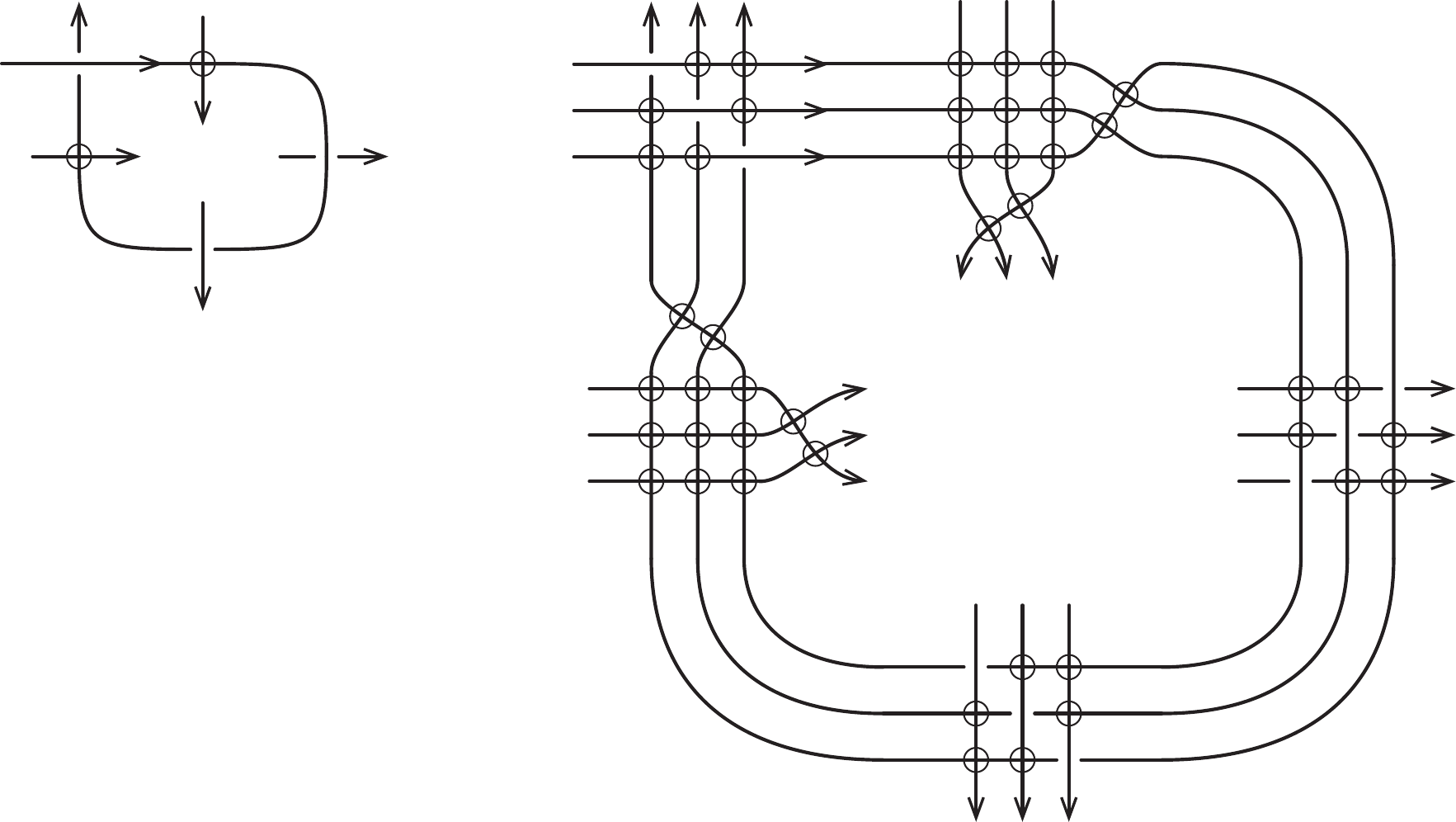}
    \put(36,106){$D$}
    \put(6,170){$x_{0}$}
    \put(144.5,196){$D_{2}$}
    \put(158.5,196){$D_{1}$}
    \put(172,196){$D_{3}$}
    \put(119,175){$D_{3}$}
    \put(119,164){$D_{2}$}
    \put(119,153){$D_{1}$}
    \put(187,-14){$L(D;3)=D_{1}\cup D_{2}\cup D_{3}$}
  \end{overpic}
  \vspace{1em}
  \caption{Proof of Theorem~\ref{thm-lk}}
\label{gap}
\end{figure}

With respect to the orientation of $\rho_{0}$, 
the number of strings going across $\rho_{0}$ at real/virtual crossings from left to right is equal to that of strings going across $\rho_{0}$ from right to left. 
Since we have $\mathrm{ind}(c_{0})+\mathrm{ind}_{v}(c_{0})=0$, 
it follows that 
\[
\mathrm{ind}(c_{0})\equiv i-j\pmod{r}. 
\] 
Thus $\mathrm{Lk}(D_{i},D_{j})$ is equal to the sum of the signs of all the real crossings of $D$ 
whose indices are congruent to $i-j$ modulo~$r$. 
\end{proof}

Again we consider an $r$-component virtual link $L=K_{1}\cup\dots\cup K_{r}$ and its diagram $D=D_{1}\cup\dots\cup D_{r}$ 
in order to define the $i$th $n$-writhe of $L$, 
which is an extension of the $n$-writhe of virtual knots.  
For an integer $i$ with $1\leq i\leq r$, 
a \emph{self-crossing} of $D_{i}$ is a real/virtual crossing of $D$ involving two strings of $D_{i}$. 
Let $c$ be a real self-crossing of $D_{i}$ and 
$\rho\subset D_{i}$ the specified path of $c$, 
which proceeds along $D_{i}$ from the overcrossing to the undercrossing at $c$. 
When walking along $\rho$, we count $\pm1$ for every real crossing on $\rho$ as explained above. 
See Figure~\ref{intersection-sign} again. 
The \emph{index of $c$ in $D$} is the sum of such $\pm1$'s for all the real crossings on $\rho$ and denoted by $\mathrm{ind}(D;c)$. 

For an integer $n$, 
we denote by $J_{n}^{i}(D)$ the sum of the signs of all the real self-crossings of $D_{i}$ with index $n$ in $D$. 
It was shown by Xu~\cite{Xu} that $J_{n}^{i}(D)$ is an invariant of the virtual link $L$ 
for any nonzero integer $n$ with $n\neq\lambda_{i}(L)$. 
Refer also to~\cite[Section~5]{NNS-writhe}. 
Here $\lambda_{i}(L)$ is an invariant of $L$ defined by 
\[
\lambda_{i}(L)=\displaystyle\sum_{1\leq j\neq i\leq r}\left(\mathrm{Lk}(K_{j},K_{i})-\mathrm{Lk}(K_{i},K_{j})\right). 
\] 
We call $J_{n}^{i}(D)$ the \emph{$i$th $n$-writhe of $L$} and denote it by $J_{n}^{i}(L)$.
We emphasize that $J_{n}^{i}(L)$ differs from the original $n$-writhe $J_{n}(K_{i})$ of $K_{i}$ in the sense that $J_{n}^{i}(L)$ is an invariant of $L$ rather than just $K_{i}$.

Now we turn to the proof of Theorem~\ref{thm-self-writhe}. 

\begin{proof}[Proof of Theorem~\ref{thm-self-writhe}]
For a diagram $D$ of $K$ and an integer $i$ with $1\leq i\leq r$, 
let $c$ be a real self-crossing of the $i$th component $D_{i}$ of $L(D;r)$. 
Set $n=\mathrm{ind}(L(D;r);c)$. 
Then there is a unique real crossing $c_{0}$ of $D$ producing $c$. 
Moreover, the signs of $c$ and $c_{0}$ are the same. 
Hence the $i$th $n$-writhe $J_{n}^{i}(L(D;r))$ is equal to the sum of the signs of all the real crossings of $D$ 
each of which yields a real self-crossing of $D_{i}$ with index $n$ in $L(D;r)$. 

Now we will show that $n=\mathrm{ind}(c_{0})\equiv0\pmod{r}$. 
Since two strings of $D_{i}$ cross at $c$, 
we have 
\[
\mathrm{ind}_{v}(c_{0})\equiv0\pmod{r}
\] 
by a similar argument to that in the proof of Theorem~\ref{thm-lk}. 
Moreover, it follows from the equation $\mathrm{ind}(c_{0})+\mathrm{ind}_{v}(c_{0})=0$ that 
\[
\mathrm{ind}(c_{0})\equiv0\pmod{r}. 
\] 

The number of real crossings of $L(D;r)$ on the specified path $\rho$ of $c$ 
is equal to that of real crossings of $D$ on the specified path $\rho_{0}$ of $c_{0}$. 
If we count $\e\in\{\pm1\}$ for a real crossing of $L(D;r)$ on $\rho$ when walking along $\rho$, then we count the same sign $\e$ for the corresponding real crossing of $D$ on $\rho_{0}$ when walking along $\rho_{0}$. 
Therefore it follows that 
\[
n=\mathrm{ind}(L(D;r);c)=\mathrm{ind}(c_{0})\equiv0\pmod{r}.
\] 
This induces that $J_{n}^{i}(L(D;r))$ is equal to the sum of the signs of all the real crossings of $D$ 
whose indices are congruent to zero modulo~$r$. 
\end{proof}

For a positive integer $r$, we define the $r$th covering of a virtual knot $K$.  
Let $D$ be a diagram of $K$. 
We denote by $D^{(r)}$ the virtual knot diagram obtained by replacing all the real crossings of $D$ 
whose indices are not divisible by~$r$ 
with virtual crossings. 
It is known that 
if two virtual knot diagrams $D$ and $D'$ are equivalent, then so are $D^{(r)}$ and $(D')^{(r)}$ for any $r\geq1$; cf.~\cite{IK,NNS,Tur}. 
That is, the equivalence class of $D^{(r)}$ is an invariant of $K$. 
The virtual knot represented by $D^{(r)}$ is called the \emph{$r$th covering} of $K$ and denoted by $K^{(r)}$.

We conclude this section with the proof of Theorem~\ref{thm-knot-type}. 

\begin{proof}[Proof of Theorem~\ref{thm-knot-type}]
Let $D$ be a diagram of $K$. 
By the definition of the $r$-multiplexing of $D$, 
for any $i\in\{1,\ldots,r\}$, 
the number of self-crossings of the $i$th component $D_{i}$ of $L(D;r)$ 
is equal to that of crossings of $D$.  
Hence there is a bijection from the set of self-crossings of $D_{i}$ 
to that of crossings of $D$. 
For a self-crossing $c$ of $D_{i}$, 
we will determine the corresponding crossing of $D$. 

If $c$ is a real crossing, 
then there is a unique real crossing $c_{0}$ of $D$ producing~$c$. 
Moreover, we have $\mathrm{ind}(c_{0})\equiv0\pmod{r}$ 
as seen in the proof of Theorem~\ref{thm-self-writhe}. 
Therefore the set of real self-crossings of $D_{i}$ corresponds bijectively to 
that of real crossings of $D$ whose indices are congruent to zero modulo~$r$. 
If $c$ is a virtual crossing, 
then there is a unique crossing $c'_{0}$ of $D$ producing $c$. 
It is either a real crossing with $\mathrm{ind}(c'_{0})\not\equiv0\pmod{r}$ 
or a virtual crossing. 
Therefore the set of virtual self-crossings of $D_{i}$ corresponds bijectively to 
the union of the set of real crossings of $D$ whose indices are not congruent to zero modulo~$r$ 
and that of virtual crossings of $D$. 
Thus the virtual knot diagram $D_{i}$ is obtained from $D$ by replacing all the real crossings of $D$ whose indices are not divisible by $r$ with virtual crossings; 
that is, $D_{i}=D^{(r)}$. 
\end{proof}

\section{Virtual colorings}\label{sec-coloring}
In this section we define virtual $n$-colorings of a virtual link diagram 
as a special case of Manturov's virtual quandle colorings (Definition~\ref{def-vcoloring}), 
and prove Theorem~\ref{thm-coloring}. 

We begin by recalling the definition of (classical) $n$-colorings of a virtual link diagram $D$. 
An \emph{arc} of $D$ proceeds from an undercrossing to the next one, where overcrossings and virtual crossings are ignored. 
We say that a map $C$ from the set of arcs of $D$ to a cyclic group $\mathbb{Z}/n\mathbb{Z}$ is 
an \emph{$n$-coloring} of $D$ if at every real crossing 
it satisfies 
\[
x+z=2y,
\] 
called \emph{Fox's $n$-coloring condition}, 
where 
$x$, $y$ and $z$ are the elements in $\mathbb{Z}/n\mathbb{Z}$ assigned by $C$ to three arcs at the real crossing as shown in Figure~\ref{classical-coloring}. 

\begin{figure}[htbp]
\centering
\vspace{1em}
  \begin{overpic}[width=1.5cm]{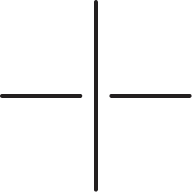}
    \put(-9,19.5){$x$}
    \put(46.2,19.5){$z$}
    \put(19,48.5){$y$}
  \end{overpic}
  \caption{Fox's $n$-coloring condition $x+z=2y$ at a real crossing}
\label{classical-coloring}
\end{figure}

Denote by $\mathrm{Col}_{n}(D)$ the set of $n$-colorings of $D$. 
It is easy to see that 
if two virtual link diagrams $D$ and $D'$ are equivalent, 
then there is a bijection between $\mathrm{Col}_{n}(D)$ and $\mathrm{Col}_{n}(D')$; 
cf. \cite[Section~4]{Kau}. 
In particular, the cardinality of $\mathrm{Col}_{n}(D)$ is an invariant of the virtual link represented by $D$. 

Let $D$ be a virtual knot diagram. 
Now we consider a certain $n$-coloring $C$ of the $2$-multiplexed virtual link diagram $L(D;2)$ of $D$ as follows. 
Let $(\alpha,\alpha')$ be any pair of parallel arcs of $L(D;2)$ except near the sets of crossings of $L(D;2)$ derived from the real crossings of $D$. 
Then $C$ satisfies $C(\alpha')=-C(\alpha)$. 
Denote by $\mathrm{Col}_{n}^{0}(L(D;2))\subset \mathrm{Col}_{n}(L(D;2))$ the set of such $n$-colorings  of $L(D;2)$. 
The right of Figure~\ref{ex-coloring} shows an example of a $3$-coloring of $L(D;2)$ in $\mathrm{Col}_{3}^{0}(L(D;2))$, 
where this $2$-multiplexed virtual link diagram $L(D;2)$ is constructed from the virtual knot diagram $D$ in the left of this figure. 
(The elements $0,1,2\in\mathbb{Z}/3\mathbb{Z}$ assigned to $D$ will be explained later.)

\begin{figure}[htbp]
\centering
  \begin{overpic}[width=11cm]{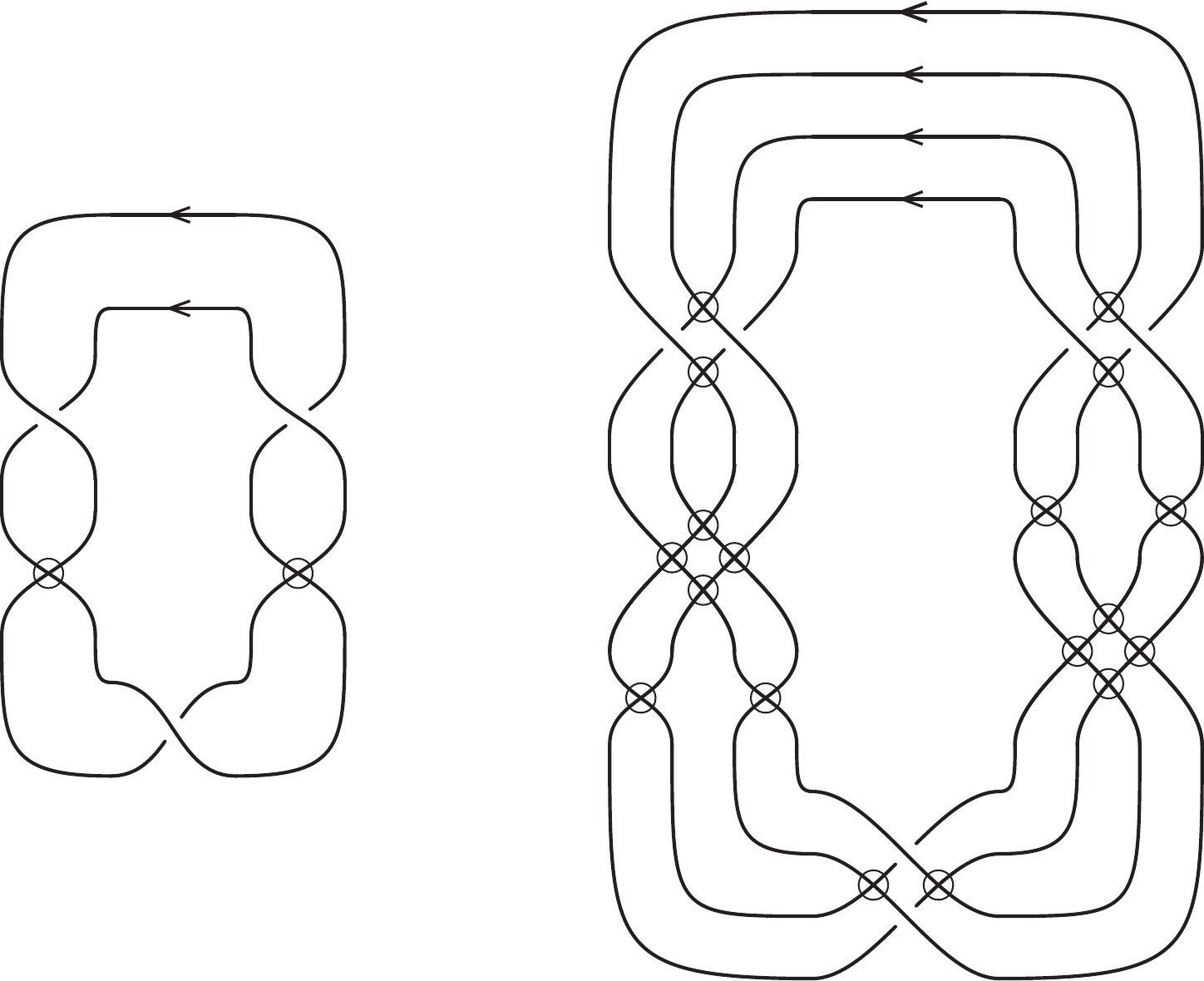}
    \put(41,38){$D$}
    \put(222,-13){$L(D;2)$}
    \put(-10,164){1}
    \put(-10,123){0}
    \put(-10,82){2}
    \put(55,123){0}
    \put(55,82){1}
    \put(95,123){2}
    \put(95,82){0}
    \put(149,208){1}
    \put(167,208){2}
    \put(149,137){0}
    \put(167,137){0}
    \put(255,55){2}
    \put(272,55){1}
  \end{overpic}
\vspace{1em}
\caption{A virtual $3$-coloring of a virtual knot diagram $D$ and a $3$-coloring of the $2$-multiplexed virtual link diagram $L(D;2)$}
\label{ex-coloring}
\end{figure}

A \emph{quandle} \cite{Joy}, also known as a \emph{distributive groupoid} \cite{Mat}, is a set $Q$ with a binary operation $\triangleright$ satisfying three axioms, which correspond to the classical Reidemeister moves R1--R3. 
Several invariants of classical links are derived from \emph{quandle colorings} of diagrams. 
A quandle coloring of a classical link diagram straightforwardly extends to that of a virtual link diagram by ignoring the virtual crossings. 
A typical example of a quandle coloring is an $n$-coloring defined above.

A \emph{virtual quandle}, introduced by Manturov~\cite{Man}, 
is a quandle $(Q,\triangleright)$ equipped with an invertible unary operation $f$ such that $\triangleright$ is distributive with respect to $f$.
Adding two new relations to every virtual crossing by $f$, he defined \emph{virtual quandle colorings} of a virtual link diagram. 
As well as quandle colorings, several invariants of virtual links are derived from virtual quandle colorings. 
Refer to \cite{Man} for more details. 

As a special case of virtual quandle colorings, we define virtual $n$-colorings of a virtual link diagram $D$ as follows. 
Divide the arcs of $D$ at the virtual crossings of $D$. 
The resulting subarcs are called \emph{virtual arcs} of $D$. 
Note that a virtual arc may contain real overcrossings.

\begin{definition}\label{def-vcoloring}
A map $C$ from the set of virtual arcs of $D$ to $\mathbb{Z}/n\mathbb{Z}$ 
is called a \emph{virtual $n$-coloring} of $D$ 
if it satisfies Fox's $n$-coloring condition at every real crossing, 
and 
\[
z=-x\quad \mbox{and}\quad w=-y
\] 
at every virtual crossing, 
where $x$, $y$, $z$ and $w$ are the elements assigned by $C$ to four virtual arcs at the virtual crossing as shown in Figure~\ref{virtual-coloring}. 
\end{definition}

\begin{figure}[htbp]
\centering
\vspace{1em}
  \begin{overpic}[width=1.5cm]{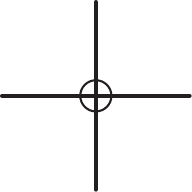}
    \put(-9,19.5){$x$}
    \put(46.2,19.5){$z$}
    \put(19,48.5){$y$}
    \put(17,-9.2){$w$}
  \end{overpic}
\vspace{1em}
\caption{Two conditions $z=-x$ and $w=-y$ at a virtual crossing}
\label{virtual-coloring}
\end{figure}

The left of Figure~\ref{ex-coloring} shows an example of a virtual $3$-coloring. 

Denote by $\mathrm{Col}_{n}^{v}(D)$ the set of virtual $n$-colorings of $D$. 
It follows from~\cite[Theorem 3]{Man} that if two virtual link diagrams $D$ and $D'$ are equivalent, then there is a bijection between $\mathrm{Col}_{n}^{v}(D)$ and $\mathrm{Col}_{n}^{v}(D')$. 
In particular, the cardinality of $\mathrm{Col}_{n}^{v}(D)$ is an invariant of the virtual link represented by $D$.

\begin{remark}
A \emph{forbidden underpass move}, which is one of the two forbidden moves in virtual knot theory~\cite[Section 1.3]{GPV}, is a local move passing a string under a virtual crossing as shown in Figure~\ref{FU}.  
We can see that if two virtual link diagrams $D$ and $D'$ are related by a forbidden underpass move, then there is a bijection between $\mathrm{Col}_{n}^{v}(D)$ and $\mathrm{Col}_{n}^{v}(D')$. 
\end{remark}

\begin{figure}[htbp]
\centering
  \begin{overpic}[width=4cm]{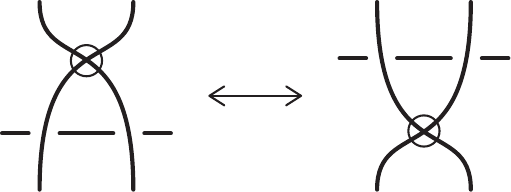}
      \end{overpic}
  \caption{A forbidden underpass move}
\label{FU}
\end{figure}

We conclude this paper with the proof of Theorem~\ref{thm-coloring}. 

\begin{proof}[Proof of Theorem~\ref{thm-coloring}]
We may assume that the number of crossings of $D$ is one or more, 
applying an R1 move if necessary. 
Let $c_{1},\dots,c_{k}$ $(k\geq1)$ be the crossings of $D$. 
Identifying $c_{1},\dots,c_{k}$ with vertices, we regard $D$ as a $4$-valent graph. 
Then we have $2k$ edges of $D$ and denote them by $e_{1},\dots,e_{2k}$. 
Let $\Delta_{1},\dots,\Delta_{k}$ be $2$-disks on the same plane as $L(D;2)$ such that for each $i\in\{1,\ldots,k\}$, 
$\Delta_{i}$ contains the set of crossings of $L(D;2)$ derived from $c_{i}$ as shown in Figure~\ref{pf-thm-coloring}. 

\begin{figure}[htbp]
\centering
\vspace{2em}
  \begin{overpic}[width=8cm]{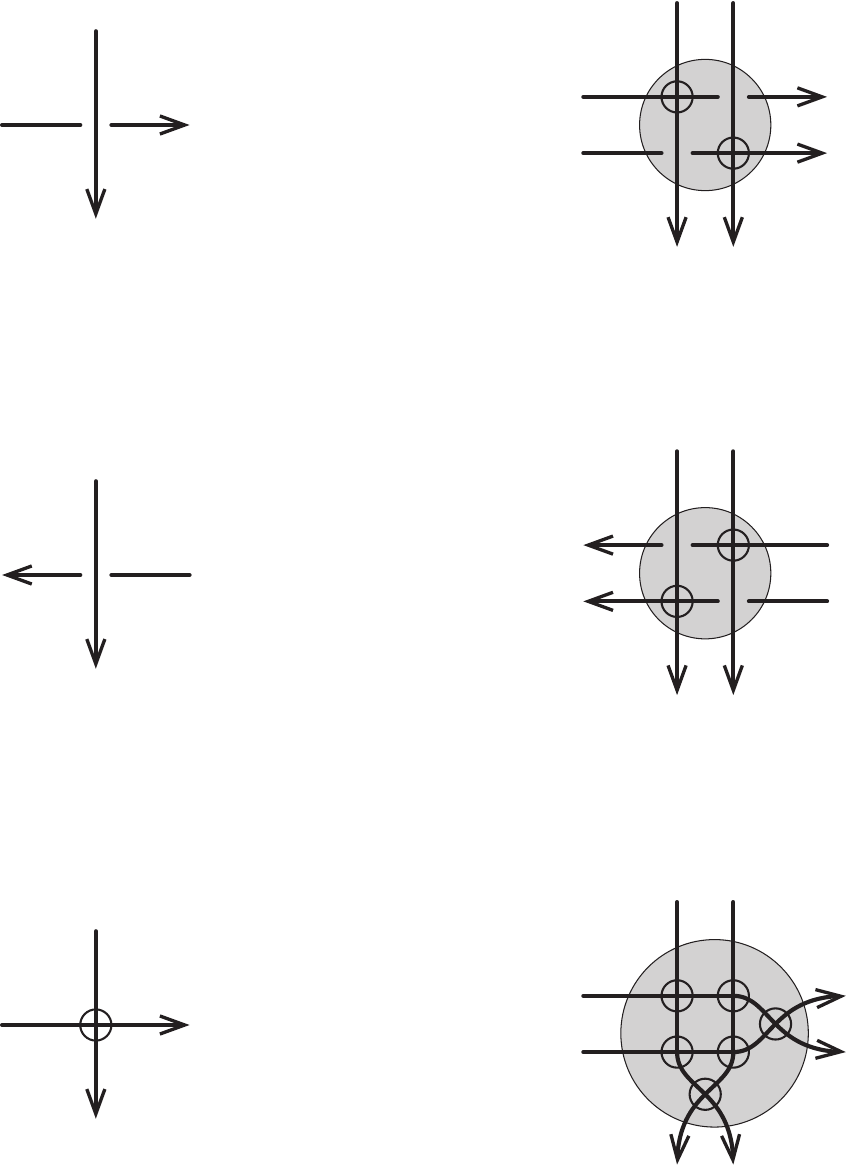}
    \put(-20,326.7){\underline{$c_{i}$ is a positive crossing.}}
    \put(-20,206){\underline{$c_{i}$ is a negative crossing.}}
    \put(-20,84.8){\underline{$c_{i}$ is a virtual crossing.}}
    \put(21,230){$D$}
    \put(13.5,269.8){$c_{i}$}
    \put(-9.25,277.3){$x$}
    \put(54.5,277.3){$2y-x$}
    \put(22.8,311.5){$y$}
    \put(22.8,246){$y$}
    \put(21,109){$D$}
    \put(13.5,148.7){$c_{i}$}
    \put(-9.25,156.2){$x$}
    \put(54.5,156.2){$2y-x$}
    \put(22.8,190.5){$y$}
    \put(22.8,125){$y$}
    \put(21,-12){$D$}
    \put(13.5,27.6){$c_{i}$}
    \put(-9.25,35.1){$x$}
    \put(54.5,35.1){$-x$}
    \put(22.8,69.5){$y$}
    \put(18.2,4){$-y$}
    \put(173.5,221.5){$L(D;2)$}
    \put(202.4,259.5){$\Delta_{i}$}
    \put(139.3,284.85){$-x$}
    \put(147,269.7){$x$}
    \put(226.5,284.85){$-(2y-x)$}
    \put(226.5,269.7){$2y-x$}
    \put(179.2,319.2){$y$}
    \put(189.85,319.2){$-y$}
    \put(179.2,238.5){$y$}
    \put(189.85,238.5){$-y$}
    \put(173.5,101){$L(D;2)$}
    \put(202.4,139){$\Delta_{i}$}
    \put(139.3,149.1){$-x$}
    \put(147,164.2){$x$}
    \put(226.5,149.1){$-(2y-x)$}
    \put(226.5,164.2){$2y-x$}
    \put(179.2,198.5){$y$}
    \put(189.85,198.5){$-y$}
    \put(179.2,117.5){$y$}
    \put(189.85,117.5){$-y$}
    \put(173.5,-25){$L(D;2)$}
    \put(212,12){$\Delta_{i}$}
    \put(139.3,42.9){$-x$}
    \put(147,27.8){$x$}
    \put(231.5,27.8){$-x$}
    \put(231.5,42.9){$x$}
    \put(179.2,77.3){$y$}
    \put(189.85,77.3){$-y$}
    \put(174.5,-8.5){$-y$}
    \put(194.5,-8.5){$y$}
  \end{overpic}
    \vspace{2em}
  \caption{A bijection between $\mathrm{Col}_{n}^{v}(D)$ and $\mathrm{Col}_{n}^{0}(L(D;2))$}
\label{pf-thm-coloring}
\end{figure}

Removing the intersection $L(D;2)\cap(\Delta_{1}\cup\dots\cup\Delta_{k})$ from $L(D;2)$, we obtain $2k$ pairs of parallel arcs of $L(D;2)$ and denote them by $(\alpha_{1},\alpha'_{1}),\dots,(\alpha_{2k},\alpha'_{2k})$. 
For each $j\in\{1,\ldots,2k\}$, we assume that a pair $(\alpha_{j},\alpha'_{j})$ originates from $e_{j}$, 
and moreover the two arcs $\alpha_{j}$ and $\alpha'_{j}$ are located on the right and left sides with respect to the orientation of $L(D;2)$, respectively. 

Now we define a map $\Psi$ from $\mathrm{Col}_{n}^{v}(D)$ to $\mathrm{Col}_{n}^{0}(L(D;2))$ as follows. 
Take a virtual $n$-coloring $C\in\mathrm{Col}_{n}^{v}(D)$. 
When an edge $e_{j}$ of $D$ receives an element $x\in\mathbb{Z}/n\mathbb{Z}$ by $C$ as a part of a virtual arc, 
we assign $x$ to $\alpha_{j}$ and $-x$ to $\alpha'_{j}$. 
Then we obtain a unique $n$-coloring of $L(D;2)$ in $\mathrm{Col}_{n}^{0}(L(D;2))$. 
See Figure~\ref{pf-thm-coloring} again. 
Hence the map $\Psi$ is injective. 

Conversely, let $C'\in\mathrm{Col}_{n}^{0}(L(D;2))$ be any $n$-coloring of $L(D;2)$. 
For every pair $(\alpha_{j},\alpha'_{j})$ of parallel arcs of $L(D;2)$, we pay attention to $\alpha_{j}$, 
which is located on the right side with respect to the orientation of $L(D;2)$. 
When $\alpha_{j}$ receives an element $x\in\mathbb{Z}/n\mathbb{Z}$ by $C'$, 
we assign the same element $x$ to $e_{j}$. 
Then we obtain a virtual $n$-coloring of $D$. 
Since the map $\Psi$ sends this coloring to $C'\in\mathrm{Col}_{n}^{0}(L(D;2))$, it is surjective. 
Therefore we have a bijection $\Psi$ from $\mathrm{Col}_{n}^{v}(D)$ to $\mathrm{Col}_{n}^{0}(L(D;2))$. 
\end{proof}



\end{document}